     \def\section{\@startsection{section}{1}%
     \z@{.7\linespacing\@plus\linespacing}{.5\linespacing}%
     {\bfseries
     \centering
     }}
     \def\@secnumfont{\bfseries}
\newcommand{\R}{\mathbb R}
\newcommand{\RR}{\mathbb R}
\newcommand{\N}{\mathbb N}
\newcommand{\E}{\mathbb E}
\newcommand{\HH}{\mathfrak H}
\newtheorem{theorem}{Theorem}[section]
\newtheorem{lemma}[theorem]{Lemma}
\newtheorem{proposition}[theorem]{Proposition}
\newtheorem{corollary}[theorem]{Corollary}
\theoremstyle{definition}
\theoremstyle{remark}
\newtheorem{remark}{Remark}
\numberwithin{equation}{section}
\begin{document}
\title[Gaussian fluctuations for the stochastic heat equation]{Gaussian fluctuations for the stochastic heat equation with colored noise}

 \author[J. Huang]{Jingyu Huang}
\address{University of Birmingham, School of Mathematics, UK}
\email{j.huang.4@bham.ac.uk}

\author[D. Nualart]{David Nualart} \thanks {D. Nualart is supported by NSF Grant DMS 1811181.}
\address{University of Kansas, Department of Mathematics, USA}
\email{nualart@ku.edu}

\author[L. Viitasaari]{Lauri Viitasaari}
\address{University of Helsinki, Department of Mathematics and Statistics, Finland}
\email{lauri.viitasaari@iki.fi}

\author[G. Zheng]{Guangqu Zheng}
\address{University of Kansas, Department of Mathematics, USA}
\email{zhengguangqu@gmail.com}

\begin{abstract}
In this paper, we present a {\it quantitative} central limit theorem for  the     $d$-dimensional stochastic heat  equation driven by a   {\it Gaussian  multiplicative  noise}, which is {\it white} in time and  has  a spatial covariance given by  the {\it Riesz kernel}.   We   show that the   spatial average of the solution over an  Euclidean ball  is close to a Gaussian distribution, when the radius of the ball tends to infinity. Our central limit theorem is  described in the {\it  total variation distance}, using Malliavin calculus and Stein's method. We also provide a functional central limit theorem.
\end{abstract}

\maketitle

\medskip\noindent
{\bf Mathematics Subject Classifications (2010)}: 	60H15, 60H07, 60G15, 60F05.

\medskip\noindent
{\bf Keywords:} Stochastic heat equation, central limit theorem, Malliavin calculus, Stein's method. 

\allowdisplaybreaks

\section{Introduction}

 We consider the   stochastic heat equation  
\begin{equation}
\label{eq:heat-equation}
\frac{\partial u }{\partial t}  = \frac{1}{2}   \Delta u + \sigma(u) \dot{W} ,
\end{equation}
on $\R_+\times \R^d$ with initial condition $u(0,x)=1$, where $\dot{W}(t,x)$ is a centered Gaussian noise with  covariance
$
\E\big[ \dot{W}(t,x) \dot{W}(s,y)\big] = \delta_0(t-s) | x-y|^{-\beta}$, $ 0<\beta < \min(d,2)$.
In this paper,  $\vert \cdot\vert$ denotes the Euclidean norm and 
  we assume    the nonlinear term
  $\sigma$ is  a Lipschitz function with $\sigma(1)\neq 0$; see point (iii) in Remark \ref{rem0}.
  
 \medskip
    Our first result is the following quantitative central limit theorem concerning the spatial average of the solution $u(t,x)$ over $B_R:= \{ x\in \R^d: |x| \le R\}$.
\begin{theorem} \label{thm:TV-distance}
For all $ t > 0$,  there exists a constant $C = C(t,\beta)$, depending on $t$ and $\beta$, such that 
\[
d_{\rm TV}\left(   \frac{1}{\sigma_R} \int_ {B_R} \big[ u(t,x) - 1 \big]\,dx, ~Z\right) \leq C R^{-\beta/2} \,,
\]
where $d_{\rm TV}$ denotes total variation distance {\rm (see \eqref{TV_def})},  $Z\sim N(0,1)$ is  a standard normal random variable and
$\sigma_R^2 = {\rm Var} \big( \int_ {B_R} [ u(t,x) - 1 ]\,dx \big)$. Moreover, the normalization   $\sigma_R$ is of order $R^{d-\frac{\beta}{2}}$, as $R\rightarrow +\infty$ {\rm (see \eqref{n0})}.
\end{theorem}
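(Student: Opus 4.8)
The plan is to follow the by now standard Malliavin–Stein scheme for CLTs of spatial averages of SPDEs. Write $F_R = \sigma_R^{-1}\int_{B_R}[u(t,x)-1]\,dx$. Since $\sigma(1)\neq 0$, the solution admits a Wiener–It\^o chaos expansion, and by the Poincar\'e-type inequality from Malliavin calculus combined with Stein's method one has the bound
\begin{equation*}
d_{\rm TV}(F_R, Z) \le C\,\sqrt{\mathrm{Var}\big( \langle DF_R, -DL^{-1}F_R\rangle_{\mathfrak H}\big)},
\end{equation*}
where $D$ is the Malliavin derivative and $L^{-1}$ the pseudo-inverse of the Ornstein–Uhlenbeck generator. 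So the whole problem reduces to estimating this variance and, separately, pinning down the exact order of $\sigma_R$.

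First I would establish the order of $\sigma_R$. Using the mild (Duhamel) formulation $u(t,x) = 1 + \int_0^t\int_{\R^d} p_{t-s}(x-y)\sigma(u(s,y))\,W(ds,dy)$, the It\^o isometry for the Walsh integral gives
\begin{equation*}
\sigma_R^2 = \int_0^t \int_{(\R^d)^2} \Big(\int_{B_R} p_{t-s}(x-y)\,dx\Big)\Big(\int_{B_R} p_{t-s}(x-z)\,dx\Big)\,\E\big[\sigma(u(s,y))\sigma(u(s,z))\big]\,|y-z|^{-\beta}\,dy\,dz.
\end{equation*}
The key observation is that $\int_{B_R} p_{t-s}(x-y)\,dx$ is close to $\mathbf 1_{B_R}(y)$ up to boundary effects of order $R^{d-1}$, so after a change of variables $y = Ru$, $z = Rv$ the dominant contribution scales like $R^{2d}\cdot R^{-\beta} = R^{2d-\beta}$, which yields $\sigma_R \asymp R^{d-\beta/2}$; one needs the lower bound too, which follows because $\E[\sigma(u(s,y))\sigma(u(s,z))]$ is bounded below near the diagonal using $\sigma(1)\neq 0$ and continuity, as in the cited argument around \eqref{n0}. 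I would be careful here to control the off-diagonal decay of the covariance $\E[\sigma(u(s,y))\sigma(u(s,z))]-$(something) so that it does not spoil the leading term.

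Next, for the variance of the inner product, I would use the chaos expansion $F_R = \sum_{n\ge 1} I_n(f_{n,R})$ and the formula $\langle DF_R,-DL^{-1}F_R\rangle_{\mathfrak H} = \sum_{n\ge1}\frac1n\|DI_n(f_{n,R})\|_{\mathfrak H}^2$. Its variance is bounded by a sum over $p\ge 1$ of contraction norms $\|f_{n,R}\otimes_p f_{m,R}\|$, and by the product formula these reduce to integrals of products of four heat kernels against two Riesz-kernel factors; alternatively, and more cleanly, I would use the Malliavin derivative directly: $D_{r,z}u(t,x)$ satisfies a linear SPDE with an explicit first-order bound $\|D_{r,z}u(t,x)\|_{L^2}\le p_{t-r}(x-z)\cdot(\text{const})$, giving
\begin{equation*}
\sigma_R^{-2}\,\big\|D F_R\big\|_{\mathfrak H}\ \text{estimates}\ \Longrightarrow\ \mathrm{Var}\big(\langle DF_R,-DL^{-1}F_R\rangle_{\mathfrak H}\big)\le C\,\sigma_R^{-4}\times (\text{a 6-fold space-time integral}).
\end{equation*}
Counting scales: the six-fold integral has two "extra" Riesz factors and, crucially, one fewer free spatial integration than $\sigma_R^4$ would need, producing an extra factor $R^{-\beta}$ against $\sigma_R^4\asymp R^{4d-2\beta}$; taking the square root gives $R^{-\beta/2}$. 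The main obstacle I anticipate is exactly this scaling bookkeeping: one must show the "overlap" of the four heat kernels forces the integration region to have effective volume $R^{2d-\beta}$ rather than $R^{2d}$, i.e. that the two Malliavin-integration variables $(r,z)$ effectively tie the two spatial blocks together through an extra $|z-\cdot|^{-\beta}$. Getting this uniformly in $t$ and with the Lipschitz (not differentiable) $\sigma$ — so that all bounds on $\|D u\|$, $\|D^2 u\|$ are replaced by their $L^2$ surrogates and Gr\"onwall-type arguments — is where the technical care goes. The final step is to combine the variance bound $\mathrm{Var}(\langle DF_R,-DL^{-1}F_R\rangle)\le C R^{2d-3\beta}$ with $\sigma_R^{-2}\asymp R^{-2d+\beta}$ inside the Malliavin–Stein inequality to conclude $d_{\rm TV}(F_R,Z)\le C R^{-\beta/2}$.
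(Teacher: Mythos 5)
Your overall strategy (Malliavin--Stein plus heat-kernel bounds on $D u$) is in the right family, but two steps as you describe them do not go through, and they are precisely where the paper's proof does something different. First, you base everything on the bound $d_{\rm TV}(F_R,Z)\le C\sqrt{\mathrm{Var}\langle DF_R,-DL^{-1}F_R\rangle_{\mathfrak H}}$ and then claim that the variance can be controlled either by chaos contractions or ``more cleanly'' by the first-order bound $\|D_{r,z}u(t,x)\|_2\le Cp_{t-r}(x-z)$. Neither route is actually available as stated: for a nonlinear, merely Lipschitz $\sigma$ the chaos kernels $f_{n,R}$ are not explicit and the contraction machinery cannot be summed, and a bound on $\|DF_R\|_{\mathfrak H}$ alone does not control $\mathrm{Var}\langle DF_R,-DL^{-1}F_R\rangle$ (the known substitutes require second Malliavin derivatives, which is exactly what one cannot assume for Lipschitz $\sigma$, and which you implicitly invoke when you mention $\|D^2u\|$). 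The paper avoids $L^{-1}$ entirely: since $F_R=\delta(v_R)$ with the explicit adapted $v_R(s,y)=\sigma_R^{-1}\mathbf 1_{[0,t]}(s)\varphi_R(s,y)\sigma(u(s,y))$, it uses the bound $d_{\rm TV}(F_R,Z)\le 2\sqrt{\mathrm{Var}\langle DF_R,v_R\rangle_{\mathfrak H}}$ (Proposition \ref{lem: dist}), writes $\langle DF_R,v_R\rangle=B_1+B_2$ via the linear equation \eqref{ecu1} for $Du$, and estimates the two resulting terms ($A_1$ needs a Clark--Ocone argument for the covariance of products $\sigma(u)\sigma(u)$, $A_2$ only needs \eqref{ecu3}); both reduce to integrals of order $R^{4d-3\beta}$ against $\sigma_R^4\asymp R^{4d-2\beta}$, giving $R^{-\beta/2}$. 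Without this $\delta(v)$ device (or an equivalent), your ``$\Longrightarrow$'' from derivative estimates to the variance bound is a gap, not a proof; note also that your final bookkeeping ($\mathrm{Var}\le CR^{2d-3\beta}$ combined with one factor $\sigma_R^{-2}$) would give $R^{-\beta}$ after the square root, so the exponents as written are inconsistent.

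Second, your argument for the lower bound on $\sigma_R$ is aimed at the wrong region. Since $\beta<d$, the near-diagonal part $\{|y-z|\le K\}$ of the variance integral contributes only $O(R^{d})$, which is negligible compared with $R^{2d-\beta}$; so ``$\E[\sigma(u(s,y))\sigma(u(s,z))]$ bounded below near the diagonal'' cannot produce the stated order of $\sigma_R$, and for a general Lipschitz $\sigma$ the covariance need not be nonnegative off the diagonal. What is actually needed (Proposition \ref{pro:covariance2}) is the quantitative decorrelation $|\E[\sigma(u(s,0))\sigma(u(s,\xi))]-\eta^2(s)|\le C|\xi|^{-\beta}$, proved via the Clark--Ocone formula together with \eqref{ecu3} and Lemma \ref{lemma:integral}, plus the fact that $\int_0^t\eta^2(s)\,ds>0$ when $\sigma(1)\neq 0$; this identifies the leading term $k_\beta(\int_0^t\eta^2)R^{2d-\beta}$ coming from pairs at distance of order $R$, which is the opposite of the diagonal-dominant picture you describe. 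Your remark that one should ``control the off-diagonal decay'' points in the right direction, but the proposal does not supply this argument, and without it neither the order of $\sigma_R$ nor the normalization in the theorem is established.
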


\begin{remark}
In the above result we have assumed $u(0,x)=1$ for the sake of simplicity. However, one can easily extend the result to cover more general initial deterministic condition $u(0,x)$. In fact, this is the topic of Corollary \ref{cor:TV-general-initial}.
\end{remark}

 We will mainly rely on the methodology of Malliavin-Stein approach to prove the above result.  Such an approach   was    introduced by Nourdin and Peccati in \cite{NP09} to, among other things,  quantify Nualart and Peccati's fourth moment theorem in \cite{NP05}. Notably, if $F$ is a Malliavin differentiable
  (that means, $F$ belongs to the Sobolev space $\mathbb{D}^{1,2}$), centered Gaussian functional  with unit variance,   the   well-known  Malliavin-Stein bound implies
 \begin{align}
 d_{\rm TV}(F, Z) : & = \sup\Big\{ P(F\in A) -  P(Z\in A) \,:\, A \subset \R  \,\,\,\, \text{Borel sets} \Big\}  \label{TV_def}  \\
 &\leq 2 \sqrt{ {\rm Var}\big(   \langle DF, -DL^{-1}F  \rangle_{\HH} \big) } \quad \text{with $Z\sim N(0,1)$,} \label{eq:dist Malliavin}
 \end{align}
where $D$ is the Malliavin derivative, $L^{-1}$ is the pseudo-inverse of the Ornstein-Uhlenbeck operator
and $\langle \cdot , \cdot \rangle_\HH$ denotes the inner product in the Hilbert space $\HH$ associated with the covariance of $W$; see also the monograph \cite{NP}. 

\medskip

It was   observed in \cite{Zhou} that   instead of  $ \langle DF, -DL^{-1}F  \rangle_\HH$, one can work with the term $ \langle DF, v  \rangle_\HH$ once $F$  can be represented as a Skorohod integral $\delta(v)$; see the following result from   \cite[Proposition 3.1]{Zhou} (see also \cite{HNV18,Eulalia}).

\begin{proposition}  \label{lem: dist}
If $F$ is a centered  random  variable in the Sobolev space $\mathbb{D}^{1,2}$ with unit variance such that  $F = \delta(v)$ for  some $\HH$-valued random variable $v$,     then,  with $Z \sim N (0,1)$,
\begin{equation}\label{eq:dist}
d_{\rm TV}(F, Z) \leq 2 \sqrt{{\rm Var} \big(  \langle DF, v\rangle_\HH  \big) }.
\end{equation}

 {\rm   This estimate enables us to bring in   tools from stochastic analysis. }
\end{proposition}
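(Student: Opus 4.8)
The plan is to combine Stein's method for the total variation distance with the Malliavin integration-by-parts (duality) formula. Recall Stein's characterization of the standard normal law: for every Borel set $A\subset\R$ there is a function $f_A\colon\R\to\R$ which is bounded, Lipschitz, and solves the Stein equation
\[
f_A'(x) - x\, f_A(x) = \mathbf{1}_A(x) - P(Z\in A), \qquad x\in\R,
\]
with the uniform estimates $\|f_A\|_\infty \le \sqrt{\pi/2}$ and $\|f_A'\|_\infty \le 2$ (see, e.g., \cite{NP}). Evaluating the Stein equation at $x=F$ and taking expectations, we get $P(F\in A)-P(Z\in A) = \E[f_A'(F)] - \E[F f_A(F)]$, so the task reduces to estimating $\E[f_A'(F) - F f_A(F)]$ uniformly in $A$ and taking the supremum over all Borel sets $A$ at the end.

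First I would exploit the representation $F=\delta(v)$ together with the duality relation between the Skorohod integral $\delta$ and the Malliavin derivative $D$ to write $\E[F f_A(F)] = \E[\delta(v) f_A(F)] = \E\big[\langle D(f_A(F)), v\rangle_\HH\big]$. Since $f_A$ is Lipschitz and $F\in\mathbb{D}^{1,2}$, the chain rule gives $D(f_A(F)) = f_A'(F)\,DF$, with $f_A'$ interpreted as the a.e.\ derivative, so that $\E[F f_A(F)] = \E\big[f_A'(F)\,\langle DF, v\rangle_\HH\big]$. Combining with the previous display,
\[
P(F\in A) - P(Z\in A) = \E\big[ f_A'(F)\big( 1 - \langle DF, v\rangle_\HH \big) \big].
\]
The key point to record here is that, again by duality, $\E\big[\langle DF, v\rangle_\HH\big] = \E[F\,\delta(v)] = \E[F^2] = 1$, because $F$ is centered with unit variance; hence the random variable $1-\langle DF, v\rangle_\HH$ has mean zero and its second moment equals ${\rm Var}\big(\langle DF, v\rangle_\HH\big)$.

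The conclusion then follows from $\|f_A'\|_\infty\le 2$ and the Cauchy--Schwarz inequality:
\[
\big| P(F\in A) - P(Z\in A) \big| \le 2\,\E\big| 1 - \langle DF, v\rangle_\HH \big| \le 2\sqrt{\E\big[(1 - \langle DF, v\rangle_\HH)^2\big]} = 2\sqrt{{\rm Var}\big(\langle DF, v\rangle_\HH\big)},
\]
and taking the supremum over Borel sets $A$ yields \eqref{eq:dist}. The only genuinely technical obstacle is the justification of the integration-by-parts step: \emph{a priori} $f_A(F)$ need not belong to $\mathbb{D}^{1,2}$, since $f_A$ is merely Lipschitz with a possibly discontinuous derivative, and one must also ensure that the duality $\E[\delta(v)\,G] = \E[\langle DG, v\rangle_\HH]$ is licit (i.e.\ $v$ pairs correctly with such $G$). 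I would dispose of this in the standard way: mollify $f_A$ into a sequence of smooth functions with uniformly bounded derivatives, apply the smooth chain rule and the duality formula to each approximant, and pass to the limit using dominated convergence together with $F\in L^2$ and $\langle DF, v\rangle_\HH\in L^2$. This is routine and does not affect the constant $2$.
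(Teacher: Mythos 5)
Your overall route is the intended one: the paper does not prove Proposition \ref{lem: dist} but quotes it from \cite[Proposition 3.1]{Zhou}, and the proof there is exactly your skeleton — Stein's equation, the duality $\E[\delta(v)G]=\E[\langle DG,v\rangle_\HH]$, the chain rule, the observation $\E[\langle DF,v\rangle_\HH]=\E[F\delta(v)]=\E[F^2]=1$, and Cauchy--Schwarz, with the constant $2$ coming from $\|f_A'\|_\infty\le 2$. The one step that does not hold up as written is precisely the ``routine'' mollification at the end. After smoothing you have $\E[Ff_{A,\epsilon}(F)]=\E[f_{A,\epsilon}'(F)\,\langle DF,v\rangle_\HH]$, and to pass to the limit on the right-hand side you need $f_{A,\epsilon}'(F)\to f_A'(F)$ almost surely (or in probability); domination alone gives nothing. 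Since $f_A'$ is discontinuous exactly where $\mathbf{1}_A$ is, and $A$ is an arbitrary Borel set, this convergence is only guaranteed when the law of $F$ does not charge that discontinuity set — e.g.\ when the law of $F$ is absolutely continuous, which the proposition does not assume. Indeed, for a merely Lipschitz $\varphi$ and a general $F\in\mathbb{D}^{1,2}$ the chain rule only yields $D\varphi(F)=G\,DF$ with $|G|\le\|\varphi'\|_\infty$, and $G$ need not coincide with $\varphi'(F)$; so your identity $\E[Ff_A(F)]=\E[f_A'(F)\langle DF,v\rangle_\HH]$ is exactly the point that requires an argument.

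Two standard repairs, either of which leaves the rest of your proof untouched. (i) Reduce to smooth test functions \emph{before} solving Stein's equation: by inner regularity of finite Borel measures on $\R$, $d_{\rm TV}(F,Z)=\tfrac12\sup\{|\E[h(F)]-\E[h(Z)]|:\ h\in C_c^\infty(\R),\ \|h\|_\infty\le 1\}$; for such $h$ the Stein solution is $C^1$ with $\|f_h'\|_\infty\le 4$, the $C^1$ chain rule applies with no caveat, and the factor $\tfrac12\cdot 4$ returns the constant $2$. (ii) Keep indicator test functions, but invoke the locality of $D$: for every Lebesgue-null Borel set $B\subset\R$ one has $DF=0$ a.s.\ on $\{F\in B\}$; since the random variable $G$ above agrees with $f_A'(F)$ outside the preimage of a Lebesgue-null set, this gives $G\,DF=f_A'(F)\,DF$ a.s.\ for any version of $f_A'$, and hence your identity holds in full generality. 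Either way the bound $d_{\rm TV}(F,Z)\le 2\,\E\big|1-\langle DF,v\rangle_\HH\big|\le 2\sqrt{{\rm Var}\big(\langle DF,v\rangle_\HH\big)}$ follows as you wrote it.
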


\begin{remark}
Throughout this paper we only work with the total variation distance. However, we point out that we can get the same rate in other  frequently used distances. Indeed, if
$\texttt{d}$ denotes either the   Kolmogorov distance, the Wasserstein distance or the  Fortet-Mourier distance, we have as well
\[
\texttt{d}(F,Z) \leq 2 \sqrt{{\rm Var} \big(  \langle DF, v\rangle_\HH  \big) } ,
\]
where $F,Z$ are given  as in  \eqref{eq:dist}; see \emph{e.g.} \cite[Chapter 3]{NP} for the properties of Stein's solution.  Thus, proceeding in the exact same lines as in this paper, we will get the same rate in these distances.  
\end{remark}

It is known (see, \emph{e.g.} \cite{Dalang99, Walsh})
that  equation \eqref{eq:heat-equation} has a unique  \emph{mild solution} $\{u(t,x): t\geq0, x \in \R^d\}$, in the sense that it is adapted to the filtration generated by $W$,  uniformly bounded in $L^2(\Omega)$  over $ [0,T]\times  \R^d$ (for  any finite $T$) and satisfies  the following integral equation in the sense of Dalang-Walsh
\begin{equation}\label{eq: mild}
u(t,x) = { \int_{\RR^d}p_t(x-y)u(0,y)dy}+   \int_0^t \int_{\RR^d}  p_{t-s}(x-y)  \sigma(u(s,y))W(ds,dy)\,, 
\end{equation}
where   $p_t(x) = (2\pi t)^{-d/2}\exp\big( - |x|^2/ (2t) \big)$ denotes the   heat kernel and is the {\it fundamental solution} to the corresponding {\it deterministic} heat equation. 

\medskip

Let us  introduce some handy notation. For fixed $t > 0$, we define
 \begin{align}\label{NOTA1}
 \varphi_R(s,y) :=  \int_{B_R}   p_{t-s} (x-y) dx  \,\,\,\,  \text{and} \,\,\,\, G_R(t): = \int_{B_R} \big[ u(t,x) - 1 \big] dx \,.
 \end{align}
If we put $F_R(t) := G_R(t)/\sigma_R$, then we   write, due to  the  Fubini's theorem,  
\begin{align}\label{FRT}
F_R(t) =  \frac{1}{\sigma_R}\int_0^t \int_{\R^d} \varphi_R(s,y) \sigma (u(s,y)) W(ds,dy) = \delta (v_R  )\,,
\end{align}
with $v_R(s,y) = \sigma_R^{-1}\mathbf{1} _{[0,t]}(s) \varphi_R(s,y) \sigma  (u(s,y) )$ taking into account  
 that  the Dalang-Walsh integral \eqref{eq: mild} is a particular case of the Skorohod integral; {see \cite{Nualart}. }

\medskip

By Proposition  \ref{lem: dist},   the proof  of Theorem \ref{thm:TV-distance}  reduces to  estimating the variance of   $ \langle DF_R(t), v_R\rangle _\HH$. One of the key steps, our Proposition \ref{pro:covariance2},  provides the exact asymptotic behavior of the normalization $\sigma_R^2$: 
\begin{align}\label{n0}
\sigma_R^2 = {\rm Var}\big( G_R(t) \big)  \sim  \left( k_\beta \int_0^t \eta^2(s)\, ds\right) R^{2d-\beta}\,,  \quad  \text{as  $R\to+\infty$;}
\end{align}
and throughout this paper, we will reserve the notation $k_\beta$ and $\eta(\cdot)$ for 
\begin{align}\label{NOTA2}
k_\beta := \int_{B_1^2} |x_1-x_2 | ^{-\beta} dx_1dx_2 \quad{\rm and}\quad  \eta(s) := \E\big[ \sigma( u(s,y) ) \big] \,.   
\end{align}

\begin{remark}\label{rem0} (i) The definition of $\eta$ does not depend on the spatial variable, due to the \emph{strict stationarity} of the solution, meaning  that the finite-dimensional distributions of the process $\{u(t, x+y),x\in \RR^d\}$ do not depend on $y$; see \cite{Dalang99}.

\medskip

(ii) Another consequence of the \emph{strict stationarity}   is that the quantity
\begin{equation}\label{Kp}
K_p(t):=\E\big[  |u(t,x)| ^p \big]
\end{equation}
does not depend on $x$. Moreover,   $K_p(\cdot)$ is uniformly bounded on compact sets. 

\medskip

(iii) Under  our constant initial condition (\emph{i.e.} $u(0,x)\equiv 1$), the   assumption $\sigma(1) \neq 0$ is  {\it necessary and  sufficient} in our paper. It is necessary, in view of the usual \emph{Picard iteration}, to exclude the situation where $u(t,x)\equiv 1$ is the unique solution, and it is sufficient to  guarantee that the integral in \eqref{n0} is nonzero. Moreover, we have the following equivalence
\begin{center} 
$\sigma(1)=0 \Leftrightarrow \sigma_R=0,\forall R>0 \Leftrightarrow  \sigma_R=0$ for some  $R>0 \Leftrightarrow{\displaystyle  \lim_{R\to\infty}\sigma_R^2 R^{\beta-2d}= 0;}$
 \end{center}
whose verification can be done in the same way as in \cite[Lemma 3.4]{DNZ18}, so we leave it as an easy exercise for interested readers.

 \medskip

(iv) Due to the stationarity again, we can see that Theorem \ref{thm:TV-distance} still holds true when we replace  $B_R$ by  $ \{ x\in\R^d:  | x - a_R | \leq R \}$, with $a_R$ possibly varying as $R\to+\infty$. Moreover, the normalization $\sigma_R$ in this translated version remains unchanged. To see this translational ``invariance'', one can alternatively go through the exactly same arguments as in the proof of Theorem \ref{thm:TV-distance}. Note that this invariance under translation justifies the statement in our abstract. 

\medskip

(v) By going through the exactly same arguments as in the proof of Theorem \ref{thm:TV-distance}, we can obtain the following result:  If we replace the ball $B_R$ by the box $\Lambda_R: = \big\{  (x_1,\ldots, x_d)\in\R^d: \vert x_i\vert\leq R, i=1,\dots, d \big\}$, Theorem \ref{thm:TV-distance} still holds true with a slightly different normalization $\sigma'_R$ given by 
\[
\sigma'_R \sim     \left(\int_0^t \eta^2(s)\int_{\Lambda_1^2} |z- y | ^{-\beta} dzdy  \, ds\right)^{1/2} R^{d-\frac{\beta}{2}}\,,  \quad  \text{as  $R\to+\infty$.}
\]
\end{remark}

 \medskip

Our second main result is the following    functional version of Theorem \ref{thm:TV-distance}.
 
\begin{theorem}  \label{thm:functional-CLT}
Fix any finite $T>0$ and recall  \eqref{NOTA2}. Then,   as $R\to+\infty$, we have
$$
\left\{  R^{\frac {\beta} 2-d}  \int_{B_R} \big[ u(t,x) -1 \big] \,dx   \right\}_{t\in [0,T]} \Rightarrow \left\{ \sqrt{k_\beta} \int_0^t   \eta(s)  dY_s\right\}_{t\in [0,T]} \,,
$$
where    $Y$ is a standard Brownian motion and the above weak convergence takes place on the space of continuous functions $C([0,T])$.
\end{theorem}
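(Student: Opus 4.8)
The plan is to establish the functional central limit theorem in two stages, following the standard route: first prove convergence of finite-dimensional distributions, then prove tightness in $C([0,T])$. For the finite-dimensional convergence, I would fix $0 \le t_1 < \cdots < t_m \le T$ and show that the vector $\big( R^{\beta/2-d} G_R(t_j) \big)_{1\le j\le m}$ converges in law to $\big( \sqrt{k_\beta}\int_0^{t_j}\eta(s)\,dY_s\big)_{1\le j\le m}$, which is a centered Gaussian vector. The natural tool here is the multivariate Malliavin--Stein bound (see \cite[Chapter 6]{NP}): each component $G_R(t_j)$ is, as in \eqref{FRT}, a Skorohod integral $\delta(\varphi_R(\cdot,\star)\sigma(u(\cdot,\star))\mathbf{1}_{[0,t_j]})$, so Malliavin differentiability is automatic and one reduces the problem to (a) computing the limiting covariance matrix and (b) controlling the ``variance of the inner product'' terms $\mathrm{Var}\big(\langle D G_R(t_j), v\rangle_\HH\big)$. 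Step (b) is exactly the kind of estimate already carried out for $t_1=t_2=t$ in the proof of Theorem~\ref{thm:TV-distance}, and rerunning those arguments with two different time horizons gives a bound of order $R^{2d-\beta}\cdot R^{-\beta/2}$ after dividing by the normalizations, hence vanishing.

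For the limiting covariance, I would compute
\[
\lim_{R\to\infty} R^{\beta-2d}\,\E\big[ G_R(t)G_R(t')\big]
\]
for $t\le t'$. Expanding both mild solutions via \eqref{eq: mild} and using the Itô--Walsh isometry together with the covariance structure $\delta_0(s-s')|y-y'|^{-\beta}$, one gets an integral over $s\in[0,t]$ of $\E[\sigma(u(s,y))\sigma(u(s,y'))]$ against $\varphi_R(s,y)\varphi_{R,t'}(s,y')|y-y'|^{-\beta}$; the scaling analysis in Proposition~\ref{pro:covariance2} (which yields \eqref{n0}) shows the leading contribution replaces $\E[\sigma(u(s,y))\sigma(u(s,y'))]$ by $\eta(s)^2$ and $\varphi_R$ by the indicator-type profile whose rescaled $L^1$-mass produces the factor $k_\beta$, giving $k_\beta\int_0^{t\wedge t'}\eta(s)^2\,ds$. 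This matches the covariance of $\sqrt{k_\beta}\int_0^\cdot \eta(s)\,dY_s$, so the finite-dimensional limit is identified.

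For tightness, I would use the Kolmogorov criterion on $C([0,T])$: it suffices to show a moment bound
\[
\E\Big[ \big| R^{\beta/2-d}\big(G_R(t) - G_R(t')\big)\big|^p \Big] \le C_{p,T}\, |t-t'|^{p/2}
\]
for some $p>2$ uniformly in $R$. By the Burkholder--Davis--Gundy inequality applied to the martingale in the Dalang--Walsh integral representation of $G_R(t)-G_R(t')$, together with the uniform $L^p$ bounds on $u(s,y)$ from \eqref{Kp} (part (ii) of Remark~\ref{rem0}) and the Lipschitz growth of $\sigma$, this reduces to estimating the $L^1$-norm in $(s,y)$ of the relevant difference of $\varphi$-kernels against the Riesz kernel, combined with the scaling $R^{\beta/2-d}$; the heat-kernel semigroup property and the bound $0<\beta<\min(d,2)$ make the required time-Hölder factor $|t-t'|^{1/2}$ (or a slightly better power) come out. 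One technical point is that the difference $G_R(t)-G_R(t')$ involves two sources of time dependence — the time integration limit and the time variable inside $p_{t-s}$ versus $p_{t'-s}$ — so I would split it accordingly and handle each piece separately.

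The main obstacle I anticipate is not the finite-dimensional convergence (that is essentially a two-time version of the already-established Theorem~\ref{thm:TV-distance} machinery) but rather obtaining the tightness bound with a genuinely usable power of $|t-t'|$, uniformly in $R$. The delicate interplay is between the scaling $R^{\beta/2-d}$, the spatial size $|B_R|$, and the singular Riesz kernel: one must check that the large-$R$ normalization does not interact badly with the short-time behavior, i.e. that the constant in the Kolmogorov bound stays bounded as $R\to\infty$. I expect this to require a careful change of variables $x\mapsto Rx$ inside $\varphi_R$ and a dominated-convergence or uniform-integrability argument to absorb the singularity $|y-y'|^{-\beta}$, exploiting $\beta<d$ for spatial integrability and $\beta<2$ for the temporal Hölder regularity.
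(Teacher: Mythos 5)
Your proposal follows essentially the same route as the paper: convergence of finite-dimensional distributions via the multivariate Malliavin--Stein bound applied to the Skorohod-integral representation (with the limiting covariance $k_\beta\int_0^{t\wedge t'}\eta^2(s)\,ds$ obtained exactly as in Proposition \ref{pro:covariance2} and Remark \ref{rem2}), plus tightness via Kolmogorov's criterion using Burkholder/Minkowski and the split of $G_R(t)-G_R(t')$ into the two sources of time dependence. The only minor difference is that the paper's increment bound carries the exponent $(t-t')^{\alpha p/2}$ for any $\alpha<1$ rather than exactly $(t-t')^{p/2}$ (the piece on $[0,t']$ involving $p_{t-r}-p_{t'-r}$ costs an $\alpha<1$), which is still ample for Kolmogorov's criterion, as you anticipated.
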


 A similar problem for the stochastic heat equation on $\R$  has been recently  considered in  \cite{HNV18}, but only in the case of a space-time white noise, where $\eta^2(s)$    appearing in the limiting variance \eqref{n0} is replaced by  $ \E\big[ \sigma^2(u(s,y)) \big]$. Such a phenomenon also appeared in the case of the one-dimensional wave equation, see the recent paper \cite{DNZ18}, whose authors also considered the Riesz kernel for the spatial fractional noise therein, which    corresponds to the case $\beta=2-2H$.

\begin{remark}When $\sigma(x)=x$, the  random variable $G_R(t)$ defined in  (\ref{NOTA1})
  has an explicit Wiener chaos expansion:
  \[
  G_R(t) =   \int_0^t\int_{\R^d} \varphi_R(s,y) \, W(ds, dy)  + \text{\it higher-order chaoses.}
  \]
In this linear case, the asymptotic result \eqref{n0} reduces to    $\sigma_R^2 \sim t k_\beta R^{2d-\beta}$, while the first chaotic component of $G_R(t)$ is  centered Gaussian with variance equal to 
\[
  \int_0^t  \int_{\R^{2d}}  \varphi_R(s,y) \varphi_R(s,z) \vert  y -z\vert^{-\beta} \, dydz \sim   t  k_\beta   R^{2d-\beta}   \,,\quad\text{as} \,\, R\rightarrow +\infty;
\]
 see \eqref{quan2}.
Therefore, due to the orthogonality of Wiener chaoses with different orders, we can conclude that $F_R(t) = G_R(t)/\sigma_R$ is asymptotically Gaussian.  It is worth pointing out that, unlike in our case, for the {\it linear} stochastic heat equation driven by  space-time white noise as considered in \cite{HNV18}, the central limit    is {\it chaotic}, meaning that  each projection on the Wiener chaos contributes to the Gaussian limit. In this case, the proof of asymptotic normality could be based on the chaotic central limit theorem from  \cite[Theorem 3]{HN} (see also \cite[Section 6.3]{NP}). 
\end{remark}

The rest of the paper is organized as follows. We present the proofs in    Sections \ref{sec:thm1} and \ref{sec:thm2}, after we recall briefly some preliminaries in Section \ref{sec:prel}. 

  Along the paper, we will denote by $C$ a generic constant that may depend on the fixed time $T$, the parameter $\beta$ and $\sigma$,  and it can vary from line to line. We also denote by $\| \cdot \| _p$  the  $L^p(\Omega)$-norm and by $L$ the Lipschitz constant of $\sigma$.

\section{Preliminaries}
\label{sec:prel}

Let us build an isonormal Gaussian process from the colored noise $W$ as follows. We begin with 
a centered Gaussian family of random variables 
$
\big\{W(\psi):\: \psi\in C_c^{\infty}\left([0,\infty)\times\R^{d}\right)\big\}
$, defined on some complete probability space, such that
\begin{align*}
\E\left[W(\psi)W(\phi)\right] =& \int_0^{\infty} \int_{\R^{2d}}\psi(s,x)\phi(s,y)|x-y|^{-\beta} \, d xd yds =: \big\langle \psi, \phi\big\rangle_\HH,
\end{align*}
where  $C_c^{\infty}\left([0,\infty)\times\R^{d}\right)$ is the space of infinitely differentiable functions with compact support on $[0,\infty)\times\R^{d}$.
Let $\mathfrak{H}$ be the Hilbert space defined as the completion of  $C_c^{\infty}\left([0,\infty)\times\R^{d}\right)$ with respect to the above inner product.
By a density argument, we obtain an isonormal Gaussian process  $W:=\big\{W(\psi):\: \psi\in\mathfrak{H}\big\}$  meaning that  for any $ \psi, \phi \in \HH$,
\[
\E\left[W(\psi)W(\phi)\right]=\big\langle \psi, \phi\big\rangle_\HH.
\]

   Let $\mathbb{F}=(\mathcal{F}_t,t\ge 0)$ be the natural filtration of $W$, with $\mathcal{F}_t$ generated by $\big\{W(\phi):\phi$ continuous and with compact support in  $[0,t]\times \R^d\big\}$. 
            Then for any $\mathbb{F}$-adapted, jointly measurable random field $X$ such that
\begin{equation}\label{eq:integrability cond}
 \E ( \| X \|^2_{\HH})= \E\left( \int_0^\infty d s\int_{\R^{2d}}d x d y\:
X(s,y)X(s,x) |x-y|^{-\beta} \right) <\infty,
\end{equation}
the stochastic integral
\[
\int_0^\infty \int_{\R^d} X(s,y)W(ds, d y)
\]
is well-defined in the sense of Dalang-Walsh and the following isometry \mbox{property} is satisfied
\[
 \E\left( \left| \int_0^\infty \int_{\R^d} X(s,y)W(ds, d y) \right|^2 \right)= 
 \E ( \| X \|^2_{\HH}).
\]

\medskip

In the sequel, we recall some basic facts on Malliavin calculus and we refer   readers to   the books \cite{Nualart, Eulalia} for any unexplained notation and result. 

\medskip

 Denote by $C_p^{\infty}(\R^n)$ the space of smooth functions with all their partial derivatives having at most polynomial growth at infinity. Let $\mathcal{S}$ be the space of simple functionals of the form 
$F = f(W(h_1), \dots, W(h_n))
$ for $f\in C_p^{\infty}(\RR^n)$ and $h_i \in \HH$, $1\leq i \leq n$, $n\in\N$. Then, $DF$ is the $\HH$-valued random variable defined by
$$
DF=\sum_{i=1}^n  \frac {\partial f} {\partial x_i} (W(h_1), \dots, W(h_n)) h_i\,.
$$
 The derivative operator $D$  is   \mbox{closable}   from $L^p(\Omega)$ into $L^p(\Omega;  \HH)$ for any $p \geq1$ and   we let $\mathbb{D}^{1,p}$ be the \mbox{completion} of $\mathcal{S}$ with respect to the norm
$$\|F\|_{1,p} = \Big(\E \big[ |F|^p  \big] +   \E \big[  \|D F\|^p_\mathfrak{H}  \big]   \Big)^{1/p}
.$$
We denote by $\delta$ the adjoint of  $D$ characterized  by the    integration-by-parts \mbox{formula}
$$
\E [\delta(u) F ] = \E\big[ \langle u, DF \rangle_\HH\big]
$$
for any $F \in \mathbb{D}^{1,2}$ and $u\in{\rm Dom} \, \delta \subset L^2(\Omega; \HH)$,  the domain of $\delta$. The operator $\delta$ is
also called the Skorohod integral,   because in the case of the Brownian motion, it coincides
with Skorohod's extension of the It\^o integral (see \emph{e.g.} \cite{GT, NuPa}). 
In our context, any   adapted random field $X$ satisfying  \eqref{eq:integrability cond} belongs to $\text{Dom}\delta$, and $\delta (X)$  coincides with the Dalang-Walsh stochastic integral. As a consequence,  the mild formulation     \eqref{eq: mild} can    be rewritten as 
\[
u(t,x) = 1 +   \delta\Big(   p_{t-\cdot} (x-*) \sigma\big(  u(\cdot, \ast) \big)\Big).
\]
It is known   that for any  $(t,x)\in\RR_+\times \RR^d$,  $u(t,x)\in\mathbb{D}^{1,p}$ for any    $p\ge 2$ and the derivative satisfies, for $t\ge s$, the linear   equation 
\begin{align}  \notag
D_{s,y}u(t,x) &=   p_{t-s} (x-y)   \sigma(u(s,y)) \\  \label{ecu1}
&\quad +  \int_s^t \int_{\R^d}  p_{t-r} (x-z)  \Sigma(r,z) D_{s,y} u(r,z) W(dr,dz),
\end{align}
where $\Sigma(r,z)$ is an adapted process, bounded by   $L$.  
 If in addition $\sigma\in C^1(\R)$, then
$\Sigma(r,z)= \sigma'(u(r,z))$. This result is proved in    \cite[Proposition 2.4.4]{Nualart}  in the case of  the  stochastic heat equation with  Dirichlet boundary conditions on $[0,1]$ driven by a space-time white noise. Its proof can be easily adapted to our case, see     also  \cite{CHN18,NQ}.

\medskip

In the end of this section, we record a technical lemma, whose proof can be found in  \cite[Lemma 3.11]{CH2}.

 \begin{lemma}\label{lemma: iteration}
  For any $p\in[ 2,+\infty)$,  $0  \leq  s  \leq t  \le T $ and $x,y \in \R^d$, we have for   every $(s,y)\in[0,T]\times \R^d$,
 \begin{equation}\label{ecu3}
  \| D_{s,y} u(t,x) \|_p   \le  C p_{t-s} (x-y)
 \end{equation}
  for some constant $C= C_{T,p}$ that may depend on $T$ and $p$. 
  \end{lemma}
 Note that in \cite{DNZ18},   the $p$-norm of the Malliavin derivative $D_{s,y}u(t, x)$ can be bounded by a multiple of the fundamental solution
of the wave equation. So it is natural to expect that a similar estimate like \eqref{ecu3} {\it may} hold for a large family of stochastic partial differential equations.

\section{Proof of Theorem \ref{thm:TV-distance}} \label{sec:thm1}

We first state a lemma, which will be used below. 

\begin{lemma}   \label{lemma:integral}
Let $Z$ be a standard Gaussian  vector on $\RR^d$ and $\beta\in(0,d)$. Then
\begin{equation}\label{lemint1}
\sup_{s>0}\int_{\RR^d}p_s(x-y)|x|^{-\beta}dx= \sup_{s>0} \E\Big[ |y+\sqrt{s}Z|^{-\beta}\Big]\leq C |y|^{-\beta}\,,
\end{equation}
for some  constant $C$ that only depends on $d$ and $\beta$.
\end{lemma}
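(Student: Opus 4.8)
The plan is to reduce \eqref{lemint1} to a one-parameter estimate and then split the integral according to the distance $|x|$ from the origin relative to $|y|$ and $\sqrt s$. The equality in \eqref{lemint1} is immediate after the change of variables $x\mapsto y+\sqrt s\, z$, so the real content is the bound $\E[\,|y+\sqrt s\,Z|^{-\beta}\,]\le C|y|^{-\beta}$ uniformly in $s>0$. By scaling $Z$ this is equivalent to showing $\E[\,|w+Z|^{-\beta}\,]\le C$ for all $w$ with $|w|\ge c$ bounded below, together with the observation that $\E[\,|w+Z|^{-\beta}\,]$ is bounded over all $w$ (finiteness at $w=0$ needs $\beta<d$, which is exactly our hypothesis). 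So really I would prove the single clean statement: there is $C=C(d,\beta)$ with
\[
g(w):=\E\big[\,|w+Z|^{-\beta}\,\big]\le C\quad\text{for all }w\in\R^d,
\]
and then recover \eqref{lemint1} by homogeneity.

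\textbf{Key steps.} First, $g(w)<\infty$ pointwise: writing $g(w)=(2\pi)^{-d/2}\int_{\R^d}|x|^{-\beta}e^{-|x-w|^2/2}\,dx$, near $x=0$ the singularity $|x|^{-\beta}$ is integrable since $\beta<d$, and the Gaussian decay controls the tail, so $g$ is finite and, by dominated convergence, continuous in $w$. Second, $g$ is bounded on compact sets by continuity. Third, for $|w|$ large I split the integral $\int_{\R^d}|x|^{-\beta}p_1(x-w)\,dx$ into the region $\{|x|\ge |w|/2\}$, where $|x|^{-\beta}\le (|w|/2)^{-\beta}\le 2^\beta$ (using $|w|\ge 1$, say) and the Gaussian integrates to at most $1$; and the region $\{|x|<|w|/2\}$, where $|x-w|\ge |w|/2$ so $p_1(x-w)\le (2\pi)^{-d/2}e^{-|w|^2/8}$, and $\int_{|x|<|w|/2}|x|^{-\beta}\,dx = c_d (|w|/2)^{d-\beta}$ grows only polynomially, so the product is $\le C |w|^{d-\beta}e^{-|w|^2/8}\to 0$. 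Combining, $\sup_w g(w)<\infty$. Fourth, unwind the scaling: $\int_{\R^d}p_s(x-y)|x|^{-\beta}dx = \E[\,|y+\sqrt s\,Z|^{-\beta}\,]$, and if $|y|\le\sqrt s$ then this is $|y|^{-\beta}\,\E\big[\,\big|\tfrac{y}{|y|}+\tfrac{\sqrt s}{|y|}Z\big|^{-\beta}\,\big]$—but it is cleaner to instead write, for arbitrary $y\ne 0$, $\E[\,|y+\sqrt s\,Z|^{-\beta}\,] = |y|^{-\beta}\,\E\big[\,|e + t Z|^{-\beta}\,\big]$ with $e=y/|y|$ a unit vector and $t=\sqrt s/|y|>0$, and then by rotational invariance of $Z$ this equals $|y|^{-\beta} g_\star(t)$ where $g_\star(t):=\E[\,|e_1+tZ|^{-\beta}\,]$ depends only on $t$. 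A further substitution $tZ\mapsto Z'$ gives $g_\star(t)=\E[\,|e_1+Z'|^{-\beta}\,]\cdot$(nothing)—more precisely $g_\star(t)= t^{-\beta}\E[\,|e_1/t + Z'|^{-\beta}\,]\le t^{-\beta}\sup_w g(w)$ for $t\le 1$, and $g_\star(t)\le \sup_w g(w)<\infty$ directly...

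Actually the cleanest packaging avoids casework: from $\E[\,|y+\sqrt s\,Z|^{-\beta}\,]=|y|^{-\beta}g_\star(\sqrt s/|y|)$ it suffices to show $\sup_{t>0}g_\star(t)<\infty$, i.e. $g_\star$ is bounded. For $t$ in a bounded interval $(0,T_0]$ this follows from continuity of $g_\star$ on $(0,\infty)$ together with $\lim_{t\to 0^+}g_\star(t)=|e_1|^{-\beta}=1$ (dominated convergence, since $|e_1+tZ|^{-\beta}\le 2^\beta$ once $t|Z|\le 1/2$, and on the complementary event one uses $\beta<d$ and a uniform bound). For $t$ large, substitute $Z'=tZ$: $g_\star(t)=t^{-\beta}\E[\,|e_1/t+Z'|^{-\beta}\,]\le t^{-\beta}\sup_{|w|\le 1}g(w)\to 0$. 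Hence $g_\star$ is bounded on $(0,\infty)$, giving \eqref{lemint1} with $C=\sup_{t>0}g_\star(t)$.

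\textbf{Main obstacle.} None of the steps is deep; the only mild subtlety is making the uniformity in $s$ (equivalently in $t=\sqrt s/|y|$) genuinely uniform across \emph{both} regimes $t\to 0$ and $t\to\infty$ without the bound blowing up—the small-$t$ regime is handled by continuity plus the explicit limit $1$, and the large-$t$ regime by the scaling that trades a growing Gaussian spread for a $t^{-\beta}$ prefactor. The one place to be careful is the integrability of the singularity $|x|^{-\beta}$ at the origin, which is precisely why the hypothesis $\beta\in(0,d)$ is needed and used; everywhere else the Gaussian tail does the work.
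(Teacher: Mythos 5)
Your proof is correct, and its computational core is the same as the paper's: the change of variables giving $\int_{\R^d}p_s(x-y)|x|^{-\beta}\,dx=\E\big[|y+\sqrt s\,Z|^{-\beta}\big]$, and the splitting of the Gaussian integral at $|x|=|w|/2$, where $\beta<d$ makes the singularity integrable near the origin and the factor $e^{-|w|^2/8}$ absorbs the polynomial growth $|w|^{d-\beta}$. The difference lies in how the uniformity is organized. The paper keeps the factor $(|w|/2)^{-\beta}$ on the region $|x|\ge |w|/2$, so the very same two-region split already yields the quantitative bound $\E\big[|w+Z|^{-\beta}\big]\le C|w|^{-\beta}$ for \emph{all} $w$ at $s=1$, and the general $s$ then follows by one line of scaling. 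You instead discard that decay (bounding $(|w|/2)^{-\beta}$ by $2^\beta$), prove only $\sup_w g(w)<\infty$ for $g(w)=\E[|w+Z|^{-\beta}]$, and recover the missing decay through the ratio variable $t=\sqrt s/|y|$: large $t$ via the scaling identity $g_\star(t)=t^{-\beta}g(e_1/t)$, small $t$ via continuity of $g_\star$ together with $\lim_{t\to0^+}g_\star(t)=1$. This works, but be aware that your first packaging --- ``show $g$ is bounded and recover \eqref{lemint1} by homogeneity'' --- is not sufficient on its own: boundedness of $g$ only covers the regime $|y|\le\sqrt s$, and the regime $\sqrt s\ll |y|$ requires precisely the small-$t$ control of $g_\star$ (equivalently the decay $g(w)\le C|w|^{-\beta}$), which your final paragraph does supply; when writing it up, make the dominated-convergence step explicit, e.g.\ $\E\big[|e_1+tZ|^{-\beta}\mathbf{1}_{\{|e_1+tZ|<1/2\}}\big]=\int_{|x|<1/2}|x|^{-\beta}p_{t^2}(x-e_1)\,dx\le (2\pi t^2)^{-d/2}e^{-1/(8t^2)}\int_{|x|<1/2}|x|^{-\beta}\,dx\to0$. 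In short, both proofs rest on the identical splitting; the paper's version is shorter because retaining the $|w|^{-\beta}$ factor delivers the decay for free, whereas yours trades it for a soft continuity/compactness argument in $t$.
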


\begin{proof}
We want to show that 
$$
\int_{\RR^d}p_s(x-y)|x|^{-\beta} dx \leq C |y|^{-\beta}\,,
$$
where $C$ does not depend on $s$.  We first prove this for $s=1$. 

\begin{align*}
\int_{\RR^d} e^{-\frac{|x-y|^2}{2}}|x|^{-\beta} {dx} &=\int_{|x|\leq \frac{|y|}{2}}e^{-\frac{|x-y|^2}{2}}|x|^{-\beta} dx + \int_{|x|> \frac{|y|}{2}}e^{-\frac{|x-y|^2}{2}}|x|^{-\beta} dx\\
&\leq   \int_{|x|\leq \frac{|y|}{2}} e^{-\frac{|y|^2}{8}} |x|^{-\beta} dx + 2^\beta\int_{|x|\geq \frac{|y|}{2}} e^{-\frac{|x-y|^2}{2}} |y|^{-\beta}dx\\
&\leq  C_1 e^{-\frac{|y|^2}{8}} |y|^{-\beta+d} + C_2 |y|^{-\beta},
\end{align*}
{where the constants $C_1$ and $C_2$ only depend on $d,\beta$. So we have proved \eqref{lemint1} for $s=1$.}

Therefore, we have for general $s>0$,
\begin{align*}
\E\Big[ |y+\sqrt{s}Z|^{-\beta}\Big] = \E\Big[  \Big\vert \big(  ys^{-1/2} + Z\big) \sqrt{s} \Big\vert^{-\beta} \Big] \leq C  \vert ys^{-1/2} \vert^{-\beta} (\sqrt{s} )^{-\beta} = C | y |^{-\beta}.
\end{align*}
That is,   we have proved \eqref{lemint1} for general $s>0$. \end{proof}

 Now we begin with the estimate \eqref{n0} recalled below.

\begin{proposition}  \label{pro:covariance2}
With the notation  \eqref{NOTA1} and \eqref{NOTA2}, we have
$$
\lim_{R\to+\infty} R^{\beta -2d} \E[ G^2_R(t)] =   k_\beta \int_0^t  \eta^2(s)\, ds.
$$
\end{proposition}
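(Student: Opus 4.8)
\emph{Overall strategy.} The natural starting point is the Dalang--Walsh isometry. Since $u(t,x)-1=\int_0^t\int_{\RR^d}p_{t-s}(x-y)\sigma(u(s,y))\,W(ds,dy)$, Fubini gives $G_R(t)=\int_0^t\int_{\RR^d}\varphi_R(s,y)\sigma(u(s,y))\,W(ds,dy)$, and the isometry yields
\begin{align*}
\E[G_R^2(t)]=\int_0^t\int_{\RR^{2d}}\varphi_R(s,y)\varphi_R(s,z)|y-z|^{-\beta}\,\E\big[\sigma(u(s,y))\sigma(u(s,z))\big]\,dy\,dz\,ds.
\end{align*}
Writing $\E[\sigma(u(s,y))\sigma(u(s,z))]=\eta^2(s)+\mathrm{Cov}\big(\sigma(u(s,y)),\sigma(u(s,z))\big)$, which is legitimate because strict stationarity makes $\E[\sigma(u(s,y))]$ independent of the spatial point, I split $\E[G_R^2(t)]=I_R+II_R$ accordingly. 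The goal is then to show $R^{\beta-2d}I_R\to k_\beta\int_0^t\eta^2(s)\,ds$ and $R^{\beta-2d}II_R\to0$.

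\emph{The main term $I_R$.} Using $\varphi_R(s,y)=\int_{B_R}p_{t-s}(x-y)\,dx$, Fubini and the semigroup identity $p_a\ast p_a=p_{2a}$, one rewrites
\begin{align*}
\int_{\RR^{2d}}\varphi_R(s,y)\varphi_R(s,z)|y-z|^{-\beta}\,dy\,dz=\int_{B_R^2}\int_{\RR^d}p_{2(t-s)}(x_1-x_2-w)|w|^{-\beta}\,dw\,dx_1\,dx_2.
\end{align*}
Rescaling $x_i=R\tilde x_i$, $w=R\tilde w$ and using $p_{2a}(R\xi)R^d=p_{2a/R^2}(\xi)$ converts this into $R^{2d-\beta}\int_{B_1^2}\int_{\RR^d}p_{2(t-s)/R^2}(\tilde x_1-\tilde x_2-\tilde w)|\tilde w|^{-\beta}\,d\tilde w\,d\tilde x_1\,d\tilde x_2$. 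As $R\to\infty$ the kernel $p_{2(t-s)/R^2}$ is an approximate identity, so the inner integral tends to $|\tilde x_1-\tilde x_2|^{-\beta}$ for a.e.\ $(\tilde x_1,\tilde x_2)$, while Lemma \ref{lemma:integral} bounds it by $C|\tilde x_1-\tilde x_2|^{-\beta}$, which is integrable on $B_1^2$ since $\beta<d$; dominated convergence then gives $R^{\beta-2d}\int\varphi_R\varphi_R|y-z|^{-\beta}\to k_\beta$, with the bound $Ck_\beta$ holding uniformly in $s\in[0,t]$ and $R>0$. Since $\eta$ is bounded on $[0,t]$ (Lipschitz $\sigma$ plus the uniform $L^2$-bound on $u$ from Remark \ref{rem0}(ii)), a second dominated convergence in $s$ yields $R^{\beta-2d}I_R\to k_\beta\int_0^t\eta^2(s)\,ds$.

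\emph{The error term $II_R$.} The key input is a covariance estimate: by a standard covariance inequality (via the Clark--Ocone representation together with the duality $\E[\delta(v)F]=\E\langle v,DF\rangle_\HH$, or via the Mehler formula), followed by conditional Jensen and Cauchy--Schwarz,
\begin{align*}
\big|\mathrm{Cov}(\sigma(u(s,y)),\sigma(u(s,z)))\big|\le\int_0^s\int_{\RR^{2d}}\|D_{r,w}\sigma(u(s,y))\|_2\,\|D_{r,w'}\sigma(u(s,z))\|_2\,|w-w'|^{-\beta}\,dw\,dw'\,dr.
\end{align*}
Since $\sigma$ is Lipschitz, $\|D_{r,w}\sigma(u(s,y))\|_2\le L\|D_{r,w}u(s,y)\|_2\le Cp_{s-r}(y-w)$ by Lemma \ref{lemma: iteration}; inserting this and using $p_{s-r}\ast p_{s-r}=p_{2(s-r)}$ gives $|\mathrm{Cov}(\cdots)|\le C\int_0^s\int_{\RR^d}p_{2(s-r)}(y-z-u)|u|^{-\beta}\,du\,dr$. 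Lemma \ref{lemma:integral} bounds this by $Ct|y-z|^{-\beta}$, while bounding the inner integral by its value at $y=z$ (the Gaussian density being symmetric decreasing) bounds it by $C\int_0^s(s-r)^{-\beta/2}\,dr\le Ct^{1-\beta/2}$, finite exactly because $\beta<2$; hence $|\mathrm{Cov}(\sigma(u(s,y)),\sigma(u(s,z)))|\le C_t\min(1,|y-z|^{-\beta})$, so that $|y-z|^{-\beta}|\mathrm{Cov}(\cdots)|\le C_t\min(|y-z|^{-\beta},|y-z|^{-2\beta})$. Using $\varphi_R\le1$ and $\|\varphi_R(s,\cdot)\|_{L^1(\RR^d)}=c_dR^d$, Young's inequality on the region $|y-z|\le1$ and a dyadic shell decomposition on $|y-z|>1$ give $|II_R|\le C_tR^{\max(d,\,2d-2\beta)}(1+\log R)=O(R^{2d-\beta-\gamma})$ for some $\gamma=\gamma(\beta,d)>0$, because $0<\beta<d$ forces $\max(d,2d-2\beta)<2d-\beta$. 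Hence $R^{\beta-2d}II_R\to0$, and combining with the main term completes the proof.

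\emph{Expected main obstacle.} The delicate part is the treatment of $II_R$. One must first set up the Malliavin covariance inequality and correctly propagate the pointwise heat-kernel bound of Lemma \ref{lemma: iteration} through it, and then recognize the crucial structural point: although $|\mathrm{Cov}(\sigma(u(s,y)),\sigma(u(s,z)))|$ only decays like $|y-z|^{-\beta}$ — the very rate that generates the main term — the additional factor $|y-z|^{-\beta}$ coming from the spatial covariance of the noise makes the product integrable over $B_R\times B_R$ at a strictly lower power of $R$. The constraints of the model appear transparently here: $\beta<2$ keeps $\int_0^s(s-r)^{-\beta/2}\,dr$ finite, and $\beta<d$ both guarantees $k_\beta<\infty$ and yields the gain $\max(d,2d-2\beta)<2d-\beta$.
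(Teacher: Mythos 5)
Your proposal is correct and follows essentially the same route as the paper: the Dalang--Walsh isometry, splitting $\E[\sigma(u(s,y))\sigma(u(s,z))]=\eta^2(s)+\mathrm{Cov}(\cdots)$, the Clark--Ocone/chain-rule covariance bound $|\mathrm{Cov}|\le C|y-z|^{-\beta}$ via Lemma \ref{lemma: iteration}, the semigroup property and Lemma \ref{lemma:integral}, and the scaling plus dominated-convergence argument for the main term. The only divergence is in how the remainder is killed: the paper uses a soft $\varepsilon$--$K$ splitting based on the uniform decay of $\Psi(s,\xi)-\eta^2(s)$, whereas you exploit the same estimate quantitatively through $\min\big(|y-z|^{-\beta},|y-z|^{-2\beta}\big)$ with a Young/dyadic bound, which even gives a polynomial rate for the error term --- a harmless (slightly stronger) variant of the same argument.
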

\begin{proof}  We   define $
  \Psi (s, y)=\E\big[ \sigma (u(s,0))\sigma (u(s,y)) \big] 
 $.  Then
\begin{align*}
\E  [ G_R^2(t)] &=    \int_0^t \int_{\R^{2d}} \varphi_R(s,y)  \varphi_R(s,z)  \E\big[ \sigma (u(s,y))\sigma (u(s,z)) \big]   | y-z| ^{-\beta} dydzds \\
&= \int_0^t \int_{\R^{2d}} \varphi_R(s,\xi+z  )\varphi_R(s,z)    \Psi(s,\xi)  | \xi| ^{-\beta} d\xi dzds \,,\quad\text{\small by stationarity.}
\end{align*}

We claim that
 \begin{equation} \label{ecu11}
 \lim_{\vert\xi \vert\rightarrow +\infty}  \sup_{0\le s\le t } | \Psi(s,\xi)  -\eta^2(s)|=0.
 \end{equation}
To see \eqref{ecu11}, we apply a version of the Clark-Ocone formula for square integrable functionals of the noise $W$ and     we   can write
\[
  \sigma (u(s,y)) = \E[ \sigma (u(s,y))]
 + \int_0^s    \int_{\R^d}  \E\Big[  D_{r,\gamma}\big( \sigma (u(s,y)) \big) | \mathcal{F}_r\Big]  \, W(dr, d\gamma) \,.
\]
As a consequence,
$
 \E\big[ \sigma (u(s,y) )\sigma  (u(s,z) ) \big]  = \eta^2(s)+ T(s, y,z),
$
 where
\begin{align}
T(s, y,z)  \notag
&=\int_0^s    \int_{\R^{2d}}   \E\Big\{  \E\Big[  D_{r,\gamma}\big( \sigma (u(s,y)) \big) | \mathcal{F}_r\Big]  \E\Big[ D_{r,\lambda}\big( \sigma (u(s,z))\big) | \mathcal{F}_r\Big]  \Big\} \qquad\\  \label{fb2}
&\qquad\qquad\qquad\qquad\qquad\times  | \gamma-\lambda|^{-\beta} d\gamma d\lambda dr.
\end{align}
By the chain-rule (see \cite{Nualart}),  $ D_{r,\gamma}\big( \sigma (u(s,y)) \big) = \Sigma(s,y) D_{r,\gamma} u(s,y)$, with $\Sigma(\cdot,\ast)$  $\mathbb{F}$-adapted and  bounded by  $L$ ({in particular, $\Sigma(s,y) = \sigma'(u(s,y))$, when $\sigma$ is differentiable}
.) This implies,  
 using  (\ref{ecu3}),
 \begin{align} \notag
 &\quad  \Big\vert \E\Big\{  \E\Big[  D_{r,\gamma}\big( \sigma (u(s,y)) \big) | \mathcal{F}_r\Big]  \E\Big[ D_{r,\lambda}\big( \sigma (u(s,z))\big) | \mathcal{F}_r\Big]  \Big\}  \Big\vert \\  \label{fb1}
 &  \leq   L^2 \|D_{r,\gamma} u(s,y)\|_2   \|D_{r,\lambda} u(s,z)\|_2 \leq C p_{s-r}(\gamma-y) p_{s-r} (\lambda-z),
 \end{align}
 for some constant $C$. Therefore,  substituting (\ref{fb1}) into  (\ref{fb2}) and using the semigroup property in \eqref{eq35}, we can write 
  \begin{align} \nonumber
   \vert T(s,y,z)  \vert   &  \leq   C  \int_0^s    \int_{\R^{2d}} 
    p_{s-r}(\gamma-y) p_{s-r} (\lambda-z) 
  | \gamma-\lambda|^{-\beta} d\gamma d\lambda dr  \\
  &  = C\int_0^s \int_{\R^d} p_{2s-2r}(\gamma+z-y)|\gamma|^{-\beta} d\gamma dr \label{eq35}\\
  & \leq C |y-z|^{-\beta} \xrightarrow{\vert y-z\vert\to+\infty} 0  \,,   \label{exp1}
  \end{align}
  {by Lemma \ref{lemma:integral}}. 
   This completes the verification of   \eqref{ecu11}.

  \medskip

Let us continue our proof of Proposition \ref{pro:covariance2} by  proving  that  
\begin{align}\label{quan}
 R^{\beta-2d}   \int_0^t \int_{\R^{2d}}  \varphi_R(s,\xi+z  )\varphi_R(s,z)    \big[ \Psi(s,\xi) -  \eta^2(s)\big]    \vert \xi \vert ^{-\beta} d\xi dzds  \to 0 \,,
 \end{align}
 as $R\to+\infty$. Notice that
\begin{align}  \label{mm1}
\int_{\R^d}  \varphi_R(s,\xi+z  )\varphi_R(s,z) dz = 
\int_{  B_R^2}  p_{2(t-s)} (x_1-x_2-\xi) dx_1dx_2 \le CR^d\,.
\end{align} 
From  (\ref{ecu11}) we deduce that 
given any $\varepsilon > 0$, we find $K = K_\varepsilon > 0$ such that   $| \Psi(s, \xi) -\eta^2(s)| < \varepsilon$ for all $s\in [0,t]$ and $|\xi|\geq K$.  Now we divide the  integration domain  in (\ref{quan}) into two parts $\vert \xi \vert\leq K$ and $\vert \xi \vert > K$.

\medskip
\noindent
{\it  Case \rm (i):}    On the region $| \xi | \le K$,
since   $\Psi(s,y-z) - \eta^2(s) = T(s,y,z )$ is uniformly bounded,  using (\ref{mm1}),  we can write
\begin{align*}
&\quad R^{\beta-2d}  \int_0^t \int_{\RR^{2d}}  \varphi_R(s,\xi+z  )\varphi_R(s,z)  \big| \Psi(s,\xi) - \eta^2(s)  \big|  \mathbf{1}_{\{ \vert\xi\vert \leq K\}}     \vert \xi \vert ^{-\beta} \, d\xi dzds \\
&\leq C R^{\beta-2d} \int_0^t \int_{|\xi | \le K} \left(    \int_{\RR^d}  \varphi_R(s,\xi+z  )\varphi_R(s,z) \, dz \right) \vert \xi \vert ^{-\beta } \, d\xi ds\\
&\le  C R^{\beta-d} \int_0^t \int_{|\xi | \le K}   \vert \xi \vert ^{-\beta } \, d\xi ds \xrightarrow{R\to+\infty} 0 \,,\quad \text{because of $\beta <d$.}
\end{align*}

\medskip
\noindent
{\it  Case \rm (ii):}  On the region $[0,t]\times B_K^c$,  $| \Psi(s, \xi) -\eta^2(s)| < \varepsilon$. Thus, 
    \begin{align}
&  \quad R^{\beta-2d}  \int_0^t \int_{\RR^{2d}}  \varphi_R(s,\xi+z  )\varphi_R(s,z)  \big| \Psi(s,\xi) - \eta^2(s)  \big| \mathbf{1}_{\{ \vert\xi\vert > K\}}     \vert \xi \vert ^{-\beta} \, d\xi dzds \notag \\
&\leq  \varepsilon R^{\beta-2d}  \int_0^t \int_{\R^d} \int_{ B_R^2} p_{2(t-s)} (x_1-x_2-\xi)  |\xi |^{-\beta} dx_1dx_2 d\xi ds  \notag\\
&=  \varepsilon R^{\beta-2d}  \int_0^t \int_{ B_R^2} \E\left[ \left| x_1-x_2 - \sqrt{2s} Z \right| ^{-\beta} \right] dx_1dx_2 ds \notag\\
&=  \varepsilon  \int_0^t \int_{ B_1^2} \E\left[ \left| x_1-x_2 -R^{-1} \sqrt{2s} Z \right| ^{-\beta} \right] dx_1dx_2 ds \leq C \varepsilon t k_\beta \,, ~\text{ by Lemma \ref{lemma:integral},} \notag
\end{align}
with $Z$  a  standard Gaussian  random vector on $\R^d$.   This establishes the limit \eqref{quan}, since $\varepsilon > 0$ is arbitrary.

\medskip

Now, it suffices to show that
\begin{align}  
  R^{\beta-2d}   \int_0^t \eta^2(s)  \int_{\R^2}  \varphi_R(s,\xi+z  )\varphi_R(s,z)          \vert \xi \vert ^{-\beta} d\xi dzds  
\to k_\beta   \int_0^t \eta^2(s)ds. \label{quan2}
 \end{align}
 as $R\to+\infty$. This follows from   arguments similar to those used  above. In fact,   previous computations 
  imply that the left-hand side of \eqref{quan2} is equal to 
  \begin{align*}
   \int_0^t  \eta^2(s) \left(\int_{ B_1^2} \E\Big[ | x_1-x_2 -R^{-1} \sqrt{2s} Z  | ^{-\beta} \Big] dx_1dx_2\right) ds.
   \end{align*}
In view of {Lemma \ref{lemma:integral}} and dominated convergence theorem, we obtain the limit in  \eqref{quan2} and hence  the proof of the proposition is completed. \end{proof}

\begin{remark}\label{rem2} By using the same argument as in the   proof of Proposition   \ref{pro:covariance2}, we obtain an asymptotic  formula for $\E\big[ G_R(t_i) G_R(t_j) \big]$ with $t_i, t_j\in\R_+$, which is a useful ingredient for our proof of  Theorem \ref{thm:functional-CLT}.    Suppose  $t_i , t_j\in\R_+$,    we   write  
\begin{align*}
\E\big[ G_R(t_i) G_R(t_j) \big] 
    = \int_0^{t_i\wedge t_j} \int_{\R^{2d}} \varphi^{(i)}_R(s,y)  \varphi^{(j)}_R(s,z) \Psi(s,y-z)     | y-z| ^{-\beta} dydzds\,,
\end{align*}
 with $ \varphi^{(i)}_R(s,y) = \int_{B_R} p_{t_i-s}(x-y)\, dx$,  and  we  obtain
\begin{align*}
&\quad \lim_{R\to+\infty}  R^{\beta-2d}  \E\big[ G_R(t_i) G_R(t_j) \big]   \\
&= \lim_{R\to+\infty}  R^{\beta-2d}  \int_0^{t_i\wedge t_j} ds~ \eta^2(s)   \int_{\R^{2d}} \varphi^{(i)}_R(s,y)  \varphi^{(j)}_R(s,z)    | y-z| ^{-\beta} dydz \\
&=  k_\beta   \int_0^{t_i\wedge t_j}   \eta^2(s) \, ds \,.
\end{align*}
\end{remark}

\bigskip

We are now ready to present the proof of Theorem \ref{thm:TV-distance}.

\begin{proof}[Proof of Theorem \ref{thm:TV-distance}]         Recall from \eqref{FRT} that 
\[
F_R:=F_R(t) = \delta(v_R) \,,  ~ \text{with $v_R(s,y) = \sigma_R^{-1}\mathbf{1} _{[0,t]}(s) \varphi_R(s,y) \sigma (u(s,y)).$}  \]
Moreover,
$$
D_{s,y}F_R =\mathbf{1} _{[0,t]}(s) \frac{1}{\sigma_R}\int_{B_R} D_{s,y}u(t,x)dx \,,
$$
and   by \eqref{ecu1} and Fubini's theorem, we can write 
 \begin{align*}  
&\quad \int_{B_R} D_{s,y} u(t,x) \, dx  \\
&=  \varphi_R(s,y)  \sigma(u(s,y))     + \int_s^t \int_{\R^d}   \varphi_R(r,z)  \Sigma(r,z) D_{s,y} u(r,z) W(dr,dz).
\end{align*}
Therefore, we have the following decomposition
$
\langle DF_R,v_R\rangle_{\HH}  = B_1 +B_2,
$
with
\begin{align*}
B_1 &=    \sigma_R^{-2} \int_0^t \int_{\R^{2d}}  \varphi_R(s,y)\varphi_R(s, y')   \sigma \big(u(s,y)\big)  \sigma\big(u(s,y')\big)    |y-y'| ^{-\beta} \, dydy'ds
\intertext{and}
B_2 &=   \sigma_R^{-2}\int_0^t \int_{\R^{2d}} 
\left( \int_s^t \int_{\R^d} \varphi_R(r,z) \Sigma(r,z) D_{s,y} u(r,z) \, W(dr,dz) \right) \\
&\qquad  \qquad\qquad\qquad\qquad  \times \varphi_R(s,y') \sigma\big(u(s,y')\big) | y-y'| ^{-\beta}~ dydy' ds.
\end{align*}
By Proposition \ref{lem: dist},  $d_{\rm TV}(F_R, Z) \leq  2 \sqrt{ {\rm Var} \big[\langle DF_R,v_R\rangle_{\HH} \big] } \le  2\sqrt{2}(A_1+A_2),
 $
 with  
 \begin{align*}
  A_1& = \sigma_R^{-2}  \int_0^t   ds \Bigg(   \int_{\R^{4d}}  \varphi_R(s,y)\varphi_R(s,y') 
\varphi_R(s, \tilde{y})\varphi_R(s, \tilde{y}')   |y-y'| ^{-\beta}    \\
&\times |\tilde{y}-\tilde{y}'| ^{-\beta}      {\rm Cov} \Big[ \sigma \big(u(s,y)\big)  \sigma\big(u(s,y')\big) , \sigma \big(u(s,\tilde{y})\big)  \sigma\big(u(s,\tilde{y}')\big) \Big] ~ dy dy' d\tilde{y}d\tilde{y}'
 \Bigg)^{1/2} 
\intertext{and}
  A_2 &  =   \sigma_R^{-2} \int_0^t \Bigg(   \int_{\R^{6d}}  \int_s^t   \varphi_R(r,z)\varphi_R(r,\tilde{z} )  \varphi_R(s,y')   \varphi_R(s,\tilde{y}') \\
  &\qquad\quad\times
     \E  \Big\{    \Sigma(r,z) D_{s,y} u(r,z)  \Sigma(r,\tilde{z}) D_{s,\tilde{y}} u(r,\tilde{z}) \sigma\big(u(s,\tilde{y}')\big)\sigma\big(u(s,y')\big) \Big\}  \\
& \qquad\qquad  \times   | y-y'| ^{-\beta} | \tilde{y}-\tilde{y}'| ^{-\beta}  |z-\tilde{z}| ^{-\beta} dydy'  d\tilde{y} d\tilde{y}'   dz d\tilde{z} dr
   \Bigg)^{1/2} ds \,.
 \end{align*}
 The proof will be done in two steps:
 
 \medskip
 \noindent {\it Step 1:} ~
 Let us first estimate the term $A_2$.  By Lemma  \ref{lemma: iteration},
    \begin{align*}
&\quad  \Big\vert    \E  \Big\{    \Sigma(r,z) D_{s,y} u(r,z)  \Sigma(r,\tilde{z}) D_{s,\tilde{y}} u(r,\tilde{z}) \sigma\big(u(s,\tilde{y}')\big)\sigma\big(u(s,y')\big) \Big\} \Big\vert  \\
&   \le K_4^2(t) L^2 \| D_{s,y} u(r,z) \|_4 \| D_{s, \tilde{y}} u(r,\tilde{z}) \|_4 \leq C   p_{r-s} (y-z) p_{r-s}(\tilde{y} -\tilde{z}),
 \end{align*}
 where     $K_4(t)$ is  introduced in \eqref{Kp}.   By Proposition \ref{pro:covariance2}, we have 
          \begin{align*}
  A_2 &\leq  C R^{\beta-2d}  \int_0^t \Bigg(   \int_{\R^{6d}}  \int_s^t   \varphi_R(r,z)\varphi_R(r,\tilde{z} )  \varphi_R(s,y')   \varphi_R(s,\tilde{y}') 
     p_{r-s} (y-z) \\
&\qquad    \times p_{r-s}(\tilde{y} -\tilde{z})    | y-y'| ^{-\beta} | \tilde{y}-\tilde{y}'| ^{-\beta}  |z-\tilde{z}| ^{-\beta } dydy'  d\tilde{y} d\tilde{y}'   dz d\tilde{z} dr
   \Bigg)^{1/2} ds \,,
 \end{align*}
 where the  integral  in the spatial variables   can be rewritten as
           \begin{align*} 
 \mathbf{I}_R&:=      \int_{ B_R^4}    \int_{\R^{6d} }  p_{t-r} (x-z)  p_{t-r} (\tilde{x}-\tilde{z})  p_{t-s} (x'-y') p_{t-s} (\tilde{x}'-\tilde{y}') 
  p_{r-s} (y-z)  \\
&\qquad   \times  p_{r-s}(\tilde{y} -\tilde{z})  | y-y'| ^{-\beta} | \tilde{y}-\tilde{y}'| ^{-\beta}  |z-\tilde{z}| ^{-\beta} ~ dx d\tilde{x} dx'   d\tilde{x}'dydy'  d\tilde{y} d\tilde{y}'   dz d\tilde{z}\,. 
\end{align*}
Making the change of variables $\big(\theta= x-z$, $\tilde{\theta}= \tilde{x} -\tilde{z}$, $\eta'= x'-y'$, $\tilde{\eta}'= \tilde{x}' -\tilde{y}'$, $\eta=y-z$ and $\tilde{\eta} = \tilde{y} -\tilde{z}\big)$ yields,
\begin{align*}
& \mathbf{I}_R =      \int_{ B_R^4}   \Bigg(  \int_{\R^{6d} }  p_{t-r} (\theta)  p_{t-r} (\tilde{\theta})  p_{t-s} (\eta') p_{t-s} (\tilde{\eta}') 
  p_{r-s} (\eta) p_{r-s}(\tilde{\eta}) \\
&\qquad\qquad\qquad  \times   |  x-x' -\theta +\eta +\eta'| ^{-\beta} | \tilde{x}- \tilde{x}'  - \tilde{\theta} + \tilde{\eta} + \tilde{\eta}'| ^{-\beta}\\
&\qquad\qquad \qquad \qquad \times   |x-\tilde{x} -\theta +\tilde{\theta}| ^{-\beta} ~ d\eta d\eta'  d\tilde{\eta} d\tilde{\eta}'   d\theta d\tilde{\theta} \Bigg)dx d\tilde{x} dx'   d\tilde{x}'.
\end{align*}
This can be written as
\begin{align*}
& \mathbf{I}_R =  \int_{ B_R^4} dx d\tilde{x} dx'   d\tilde{x}' ~\E\Bigg\{     \big\vert  x-x' +\sqrt{t-r}Z_1 +\sqrt{r-s}Z_5 + \sqrt{t-s}Z_3\big\vert^{-\beta}                             \\
& \times \big\vert \tilde{x}- \tilde{x}'  +\sqrt{t-r}Z_2 + \sqrt{r-s}Z_6 + \sqrt{t-s}Z_4 \big\vert^{-\beta}  \big\vert x-\tilde{x} +\sqrt{t-r}(Z_1 + Z_2)\big\vert^{-\beta} \Bigg\}\\
&\leq C\int_{ B_R^4} dx d\tilde{x} dx'   d\tilde{x}' ~\E\Big\{     \big\vert  x-x' +\sqrt{t-r}Z_1 \big\vert^{-\beta}          \big\vert \tilde{x}- \tilde{x}'  -\sqrt{t-r}Z_2  \big\vert^{-\beta}                     \\
&\qquad\qquad\qquad \qquad \times \big\vert x-\tilde{x} +\sqrt{t-r}(Z_1 + Z_2)\big\vert^{-\beta} \Big\}, 
\end{align*}
with $Z_1, \ldots, Z_6$  i.i.d. standard Gaussian vectors on $\R^d$, {here we have used Lemma \ref{lemma:integral} {repeatedly  for $Z_5, Z_3, Z_6, Z_4$ to obtain the last inequality. }}
Making the change of variables  $x\to Rx$, $\tilde{x} \to  R\tilde{x} $, $ x' \to Rx'$ and $\tilde{x} ' \to R \tilde{x}'$, we can write
\begin{align*}
 \mathbf{I}_R& =  CR^{4d-3\beta}~ \E \int_{ B_1^4} dx d\tilde{x} dx'   d\tilde{x}'  ~  \big\vert  x-x' + R^{-1}\sqrt{t-r}Z_1  \big\vert^{-\beta}       \\
&\qquad \qquad\qquad \times \big\vert \tilde{x}- \tilde{x}'  -R^{-1}\sqrt{t-r}Z_2  \big\vert^{-\beta}  \big\vert x-\tilde{x} +R^{-1} \sqrt{t-r} (Z_1 + Z_2)\big\vert^{-\beta}  \\
& \quad \leq CR^{4d-3\beta}  \E\int_{ B_2^3} dy_1 dy_2   dy_3~   \big\vert  y_1 + R^{-1}\sqrt{t-r}Z_1  \big\vert^{-\beta}  \big\vert y_2  -R^{-1}\sqrt{t-r}Z_2  \big\vert^{-\beta} \\
&\qquad\qquad\qquad\qquad\quad \times \big\vert y_3+R^{-1} \sqrt{t-r} (Z_1 + Z_2)\big\vert^{-\beta} ,
\end{align*}
where the second inequality follows from the change of variables $x-x'=y_1$, $\tilde{x} -\tilde{x} ' =y_2$, $x-\tilde{x} =y_3$.
Taking into account the fact that
\begin{align}\label{easy2}
 \sup_{z\in\R^d}  \int_{ B_2}     \big\vert  y + z   \big\vert^{-\beta}  dy  < +\infty,
 \end{align}
 we obtain
 \[
 \mathbf{I}_R \leq CR^{4d-3\beta} \left( \sup_{z\in\R^d} \int_{B_2}\vert y+z \vert^{-\beta} dy \right)^3\leq CR^{4d-3\beta} \,,
\]
and it follows immediately that $   A_2 \le C R^{-\beta/2}   $.

\bigskip
 \noindent {\it Step 2:} ~ 
We now estimate $A_1$.   We begin by estimating the covariance
 \begin{align}\label{COVQ}
 {\rm Cov} \Big[ \sigma \big(u(s,y)\big)  \sigma\big(u(s,y')\big) , \sigma \big(u(s,\tilde{y})\big)  \sigma\big(u(s,\tilde{y}')\big) \Big]  \,.
   \end{align}
Using  a version of Clark-Ocone formula for square integrable functionals of the noise $W$, we can write
  \begin{align*}
\sigma \big(u(s,y)\big)  \sigma\big(u(s,y')\big) &=  \E\big[\sigma \big(u(s,y)\big)  \sigma\big(u(s,y')\big)\big] \\
&\quad+  \int_0^s \int_{\R^d} \E\Big[ D_{r,z} \Big(\sigma\big(u(s,y)\big)   \sigma\big(u(s,y')\big) \Big) | \mathcal{F}_r \Big] W(dr,dz).
  \end{align*}
  Then, we represent the covariance \eqref{COVQ} as 
       \begin{align*}
           & \int_0^s \int_{\R^{2d}} \E \Big\{ \E\Big[ D_{r,z} \Big(\sigma\big(u(s,y)\big)   \sigma\big(u(s,y')\big) \Big)| \mathcal{F}_r \Big] \\
        &\qquad\qquad\qquad \times \E\Big[ D_{r,z'} \Big( \sigma\big(u(s,\tilde{y})\big)   \sigma\big(u(s,\tilde{y}')\big)\Big) | \mathcal{F}_r \Big]  \Big\}         |z-z'|^{-\beta}  ~dz dz'dr .
                 \end{align*}
        By   the chain rule, 
                  $
                 D_{r,z}  \big(\sigma (u(s,y) )   \sigma (u(s,y') )\big)  = \sigma\big(u(s,y)\big) \Sigma(s,y') D_{r,z} u(s,y')
               + \sigma\big(u(s,y')\big) \Sigma(s,y) D_{r,z} u(s,y)$.
 Therefore, $\big\|  \E\big[ D_{r,z}  (\sigma (u(s,y) )   \sigma (u(s,y') )  )| \mathcal{F}_r \big]    \big\| _2$ is bounded by 
                 $
                2K_4(t) L \big\{   \| D_{r,z} u(s,y) \|_4+  \| D_{r,z} u(s,y') \|_4\big\}
              $.
Using   Lemma \ref{lemma: iteration} again,  we see that the covariance  \eqref{COVQ} is bounded by
        \begin{align*}
        &      4L^2 K_4^2(t)    \int_0^s \int_{\R^{2d} } \Big(  \left\| D_{r,z} u(s,y)\right\|_4 +\left\| D_{r,z} u(s,y')\right\|_4  \Big) \\
        &\qquad\quad \qquad\qquad \times   \Big(   \| D_{r,z'} u(s,\tilde{y}) \|_4 + \| D_{r,z'} u(s,\tilde{y}') \|_4  \Big)  |z-z'| ^{-\beta} dz dz'dr \\
        &   \le C  \int_0^s \int_{\R^{2d} }   \big( p_{s-r}(y-z)   +  p_{s-r}(y'-z) \big) \\
        & \qquad\qquad\qquad\qquad \times   \big( p_{s-r}(\tilde{y}-z')   +  p_{s-r}(\tilde{y}'-z') \big)  |z-z'| ^{-\beta} dz dz'dr . 
                 \end{align*}
Consequently, the spatial integral in the expression of $A_1$ can be bounded by 
\begin{align*}
  \mathbf{J}_R&:=C\int_0^s\int_{\R^{6d}}  \varphi_R(s,y)\varphi_R(s,y') 
\varphi_R(s, \tilde{y})\varphi_R(s, \tilde{y}')     \\
& \quad\quad\quad\times  |y-y'| ^{-\beta}  |\tilde{y}-\tilde{y}'| ^{-\beta}  |z-z'| ^{-\beta}    \Big( p_{s-r}(y-z)   +  p_{s-r}(y'-z) \Big) \\
        & \qquad\qquad\qquad\qquad \times   \Big( p_{s-r}(\tilde{y}-z')   +  p_{s-r}(\tilde{y}'-z') \Big)  dy dy' d\tilde{y}d\tilde{y}' dz dz'dr \\
        &= 4C\int_0^s dr \int_{\R^{6d}} dy dy' d\tilde{y}d\tilde{y}' dz dz' ~\varphi_R(s,y)\varphi_R(s,y') 
\varphi_R(s, \tilde{y})\varphi_R(s, \tilde{y}')     \\
& \quad   \times  |y-y'| ^{-\beta}  |\tilde{y}-\tilde{y}'| ^{-\beta}  |z-z'| ^{-\beta}    p_{s-r}(y-z)  p_{s-r}(\tilde{y}-z') ,   \quad\text{\small by  symmetry. }
        \end{align*}
 Then, it follows from exactly the same argument as in the estimation of $\mathbf{I}_R$ in  the previous step that
                $\mathbf{J}_R \leq C R^{4d-3\beta}$.    Indeed, we have 
  \begin{align}       
&\quad \int_0^s   \int_{B_R^4} \int_{\R^{6d}}   p_{t-s} (x-y) p_{t-s} (x'-y')p_{t-s} (\tilde{x}-\tilde{y})p_{t-s} (\tilde{x}'-\tilde{y}')   \notag  \\
& \quad\quad\quad\qquad\qquad\times  |y-y'| ^{-\beta}  |\tilde{y}-\tilde{y}'| ^{-\beta}  |z-z'| ^{-\beta}  p_{s-r}(y-z)  p_{s-r}(\tilde{y}-z') \notag\\
        & \qquad\qquad\qquad\qquad\qquad\qquad    dx dx' d\tilde{x} d\tilde{x}' dy dy' d\tilde{y}d\tilde{y}' dz dz'dr \notag \\
  &= \int_0^s dr \int_{B_R^4} dx dx' d\tilde{x} d\tilde{x}'   \int_{\R^{6d}}p_{t-s} (\theta) p_{t-s} (\theta')p_{t-s} (\tilde{\theta})p_{t-s} (\tilde{\theta}' )p_{s-r}(\eta) \notag  \\
        &\qquad\qquad\qquad\times p_{s-r}(\tilde{\eta})  ~\vert  x - x' -\theta  + \theta'\vert^{-\beta}    \vert \tilde{x} - \tilde{x}' -\tilde{\theta} + \tilde{\theta}'\vert^{-\beta}  \notag \\
      &\qquad\qquad\qquad\qquad\qquad  \times  \vert  x - \tilde{x}  -\eta + \theta + \tilde{\theta} + \tilde{\eta} \vert^{-\beta} d\theta d\theta' d\tilde{\theta}d\tilde{\theta}' d\eta d\tilde{\eta}  \notag\\
        &= \int_0^s dr \int_{B_R^4}  \E\Big\{  \big\vert  x - x'  + \sqrt{t-s}(Z_2-Z_1) \big\vert^{-\beta} \big\vert \tilde{x} - \tilde{x}'   +   \sqrt{t-s}(Z_4-Z_3) \big\vert^{-\beta}   \notag \\
        &\qquad \qquad \times       \big\vert  x - \tilde{x}  + \sqrt{t-s}(Z_1 + Z_3) + \sqrt{2s-2r}Z_5  \big\vert^{-\beta}  \Big\} ~dx dx' d\tilde{x} d\tilde{x}'  \notag\\
        &\leq C\int_0^s dr \int_{B_R^4}  \E\Big\{  \big\vert  x - x'  - \sqrt{t-s}Z_1 \big\vert^{-\beta} \big\vert \tilde{x} - \tilde{x}'   -   \sqrt{t-s}Z_3 \big\vert^{-\beta} \notag   \\
        &\qquad \qquad\qquad\qquad \times       \big\vert  x - \tilde{x}  + \sqrt{t-s}(Z_1 + Z_3)   \big\vert^{-\beta}  \Big\}~dx dx' d\tilde{x} d\tilde{x}'   \label{uselem} \\
                &\leq  CR^{4d-3\beta}~\E  \int_{B_1^4} dx dx' d\tilde{x} d\tilde{x}'    \big\vert  x - x'  -R^{-1} \sqrt{t-s} Z_1 \big\vert^{-\beta} \notag\\
       &\qquad\qquad \times  \big\vert \tilde{x} - \tilde{x}'   - R^{-1}  \sqrt{t-s}Z_3 \big\vert^{-\beta}    \times      \big\vert  x - \tilde{x}  +R^{-1} \sqrt{t-s}(Z_1 + Z_3) \big\vert^{-\beta} \notag \\
       &\leq CR^{4d-3\beta}~\E  \int_{B_2^3}  \big\vert  y_1  -R^{-1} \sqrt{t-s} Z_1 \big\vert^{-\beta}\notag \\
       &\qquad\quad \times  \big\vert y_2   - R^{-1}  \sqrt{t-s}Z_3 \big\vert^{-\beta}    \times      \big\vert y_3  +R^{-1} \sqrt{t-s}(Z_1 + Z_3) \big\vert^{-\beta}  dy_1 dy_2 dy_3  \notag\\
       &\leq  CR^{4d-3\beta} \left( \sup_{z\in\R^d}\int_{B_2}  \big\vert  y + z \big\vert^{-\beta} \, dy \right)^3  \leq CR^{4d-3\beta} \,,\quad\text{ by \eqref{easy2},}
\end{align}    
 where we have used Lemma \ref{lemma:integral} for $Z_5, Z_2, Z_4$ to obtain \eqref{uselem}.      
           This gives us the desired estimate for $A_1$ and finishes our proof.    \qedhere
                
            \end{proof}

 With a slight modification of the above proof, we can extend Theorem \ref{thm:TV-distance} to  more general initial conditions.
 
 \begin{corollary}
 \label{cor:TV-general-initial}
 Assume that there are two positive constants $c_1,c_2$ such that $c_1\leq u(0,x)\leq c_2$, and {assume one of the following two conditions:
 \begin{enumerate}
 \item $\sigma$ is a non-negative, nondecreasing Lipschitz  function with $\sigma(c_1) > 0$.
 
 \item  $-\sigma$  is a non-negative, nondecreasing Lipschitz  function with    $\sigma(c_1) < 0$.
 \end{enumerate}
} Define 
 \begin{equation}
 \widetilde{F}_R(t) = \frac{1}{\widetilde{\sigma}_R}  \int_{B_R}\Big[u(t,x)- \E u(t,x)  \Big]dx\,,
 \end{equation}
 where 
 $$
 \widetilde{\sigma}_R^2 = {\rm Var}\left( \int_{B_R}\Big[u(t,x)- \E u(t,x)  \Big]dx \right) {\in (0, \infty) }
 $$  {for every $R>0$. }         Then, there exists a constant $C = C(t,\beta)$, depending on $t$ and $\beta$, such that 
\[
d_{\rm TV}\left(  \widetilde{F}_R(t), ~N(0,1)\right) \leq C R^{-\beta/2} \,.
\]

 \end{corollary}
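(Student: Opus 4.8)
The plan is to follow the proof of Theorem \ref{thm:TV-distance} almost verbatim, tracking where the constant initial condition $u(0,x)\equiv 1$ was genuinely used and replacing it with the two-sided bound $c_1\le u(0,x)\le c_2$. First I would record the analogue of the mild formulation: writing $J_0(t,x):=\int_{\R^d}p_t(x-y)u(0,y)\,dy$, so that $c_1\le J_0(t,x)\le c_2$ by the bound on $u(0,\cdot)$, we have $u(t,x)-\E u(t,x)=\delta\big(p_{t-\cdot}(x-*)\sigma(u(\cdot,*))\big)-\text{(its mean-zero part already)}$, and after applying Fubini the centered spatial average $\widetilde G_R(t):=\int_{B_R}[u(t,x)-\E u(t,x)]\,dx$ is again a Skorohod integral $\delta(\widetilde v_R)$ with $\widetilde v_R(s,y)=\widetilde\sigma_R^{-1}\mathbf 1_{[0,t]}(s)\varphi_R(s,y)\sigma(u(s,y))$. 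Here one must note that strict stationarity is lost, so $\eta(s)$ and $K_p(t)$ now depend on $x$; however, Lemma \ref{lemma: iteration} and Lemma \ref{lemma:integral} do not use stationarity, and $\sup_{x}\|u(t,x)\|_p<\infty$ on compacts still holds (this follows from the standard Picard/Gronwall argument with bounded initial data), so all the moment bounds $K_4(t)$ used in Steps 1 and 2 survive with $x$-independent constants.

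Next I would re-run the variance estimate $d_{\rm TV}(\widetilde F_R,Z)\le 2\sqrt{2}(\widetilde A_1+\widetilde A_2)$ exactly as in the proof of Theorem \ref{thm:TV-distance}. The terms $\widetilde A_1,\widetilde A_2$ have the identical structure, and the crucial point is that the bounds on $\mathbf I_R$ and $\mathbf J_R$ — which gave $\mathbf I_R,\mathbf J_R\le CR^{4d-3\beta}$ via repeated use of Lemma \ref{lemma:integral} and \eqref{easy2} — never used the initial condition at all, only Lemma \ref{lemma: iteration}. So we get $\widetilde A_1\le C\widetilde\sigma_R^{-2}R^{2d-3\beta/2}$ and $\widetilde A_2\le C\widetilde\sigma_R^{-2}R^{2d-3\beta/2}$. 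Therefore the whole argument reduces to establishing the lower bound $\widetilde\sigma_R^2\ge cR^{2d-\beta}$ for all large $R$, after which $d_{\rm TV}(\widetilde F_R,Z)\le CR^{-\beta/2}$ follows immediately, and one also needs $\widetilde\sigma_R>0$ for every $R$ so that $\widetilde F_R$ is well-defined.

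The main obstacle is precisely this lower bound on the normalization $\widetilde\sigma_R^2$, since without stationarity we cannot quote \eqref{n0} and in fact the limiting constant $k_\beta\int_0^t\eta^2(s)\,ds$ is replaced by an $x$-dependent integral that could in principle degenerate. This is where the monotonicity/sign hypotheses (1)–(2) on $\sigma$ enter. Writing, as in Proposition \ref{pro:covariance2},
\[
\widetilde\sigma_R^2=\int_0^t\!\!\int_{\R^{2d}}\varphi_R(s,y)\varphi_R(s,z)\,\E\big[\sigma(u(s,y))\sigma(u(s,z))\big]\,|y-z|^{-\beta}\,dy\,dz\,ds,
\]
it suffices under hypothesis (1) to show $\E[\sigma(u(s,y))\sigma(u(s,z))]\ge \sigma(c_1)^2>0$ uniformly. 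For this I would invoke the FKG-type (Harris) inequality: conditioning on the noise up to time $s$ and using that $u(s,y)\ge $ the solution with initial data $c_1$ (comparison principle for SHE with nondecreasing $\sigma\ge 0$), together with positive association of the Gaussian noise and monotonicity of $u(s,y)$ as a functional of $W$; this yields $\E[\sigma(u(s,y))\sigma(u(s,z))]\ge \E[\sigma(u(s,y))]\,\E[\sigma(u(s,z))]\ge \sigma(c_1)^2$ since each $\sigma(u(s,y))\ge\sigma(c_1)>0$. Hypothesis (2) is handled by replacing $\sigma$ with $-\sigma$, which leaves the product $\sigma(u(s,y))\sigma(u(s,z))$ and hence $\widetilde\sigma_R^2$ unchanged. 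Combined with the upper bound $\widetilde\sigma_R^2\le CR^{2d-\beta}$ (same computation as Proposition \ref{pro:covariance2}, Case (ii), which used no stationarity), one gets $cR^{2d-\beta}\le\widetilde\sigma_R^2\le CR^{2d-\beta}$; positivity of $\widetilde\sigma_R$ for fixed $R$ follows from the same lower bound. Feeding this into the bounds on $\widetilde A_1,\widetilde A_2$ completes the proof.
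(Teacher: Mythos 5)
Your overall strategy (re-run the proof of Theorem \ref{thm:TV-distance} and isolate the lower bound $\widetilde{\sigma}_R^2\ge cR^{2d-\beta}$ as the only genuinely new ingredient) matches the paper, and your observations about the moment bounds and about $\mathbf I_R,\mathbf J_R$ not using the initial condition are correct. However, the step on which everything hinges contains a genuine gap. You claim that $\sigma(u(s,y))\ge\sigma(c_1)>0$ almost surely, which would require $u(s,y)\ge c_1$ a.s.; the comparison principle only gives $u(s,y)\ge u_1(s,y)$, where $u_1$ is the solution started at the constant $c_1$, and $u_1$ fluctuates \emph{below} $c_1$ with positive probability, so $\E[\sigma(u(s,y))]\ge\sigma(c_1)$ does not follow (nor does it follow from $\E u(s,y)\ge c_1$ without convexity of $\sigma$). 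Moreover, the FKG/Harris step you invoke requires $u(s,y)$ to be a coordinatewise nondecreasing functional of the noise $W$; this is not established anywhere and is dubious for multiplicative noise even when $\sigma\ge0$ is nondecreasing (increasing a negative noise increment while it multiplies an increased nonnegative coefficient need not increase the Picard iterates), so you cannot simply cite positive association of the Gaussian noise. As it stands, the inequality $\E[\sigma(u(s,y))\sigma(u(s,z))]\ge\sigma(c_1)^2$ is unproven, and with it the lower bound on $\widetilde{\sigma}_R^2$.

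The paper closes this gap differently, using only the comparison principle of \cite{CH} together with the monotonicity and sign hypotheses on $\sigma$: from $u\ge u_1$ one gets the pointwise inequality $\sigma(u(s,y))\sigma(u(s,z))\ge\sigma(u_1(s,y))\sigma(u_1(s,z))\ge0$, hence
\begin{equation*}
\E\big[\widetilde G_R^2(t)\big]\ \ge\ \int_0^t\!\!\int_{\R^{2d}}\varphi_R(s,y)\varphi_R(s,z)\,\E\big[\sigma(u_1(s,y))\sigma(u_1(s,z))\big]\,|y-z|^{-\beta}\,dy\,dz\,ds ,
\end{equation*}
and the right-hand side is exactly the variance functional for the \emph{stationary} solution $u_1$, to which Proposition \ref{pro:covariance2} applies verbatim: it is asymptotic to $k_\beta\big(\int_0^t\eta_1^2(s)\,ds\big)R^{2d-\beta}$, and the positivity of $\int_0^t\eta_1^2(s)\,ds$ (and of $\widetilde\sigma_R$ for each fixed $R$) follows from $\sigma(c_1)\ne0$ via point (iii) of Remark \ref{rem0}. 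The matching upper bound uses $u\le u_2$ in the same way. So no association property of the solution is needed: the monotonicity and sign assumptions on $\sigma$ are used only to turn the pathwise comparison $u_1\le u\le u_2$ into a comparison of the products $\sigma(u(s,y))\sigma(u(s,z))$, and the lower bound is then inherited from the constant-initial-condition case rather than from a claimed bound $\E[\sigma(u(s,y))]\ge\sigma(c_1)$. Replacing your FKG argument by this comparison-plus-Proposition-\ref{pro:covariance2} step repairs the proof.
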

\begin{proof}
We write $\widetilde{F}_R(t)$ as $\widetilde{G}_R(t) / \widetilde{\sigma}_R$. Let $u_1(t,x)$ be the solution to  equation \eqref{eq:heat-equation} with initial condition $u(0,x)=c_1$. According to the weak comparison principle (see \cite[Theorem 1.1]{CH}), $u(t,x)\geq u_1(t,x)$ almost surely  for every $t\geq0$ and $x\in \RR^d$, which  immediately implies 
$$
\sigma\big(u(s,y)\big)\sigma\big(u(s,z)\big) \geq   \sigma\big(u_1(s,y)\big)\sigma\big(u_1(s,z)\big)  \geq 0  \quad\text{almost surely,}
$$   so that 
\begin{align}
&\E\big[ \widetilde{G}^2_R(t) \big]= \int_0^t \int_{\RR^{2d}} \varphi_R(s,y)\varphi_R(s,z)\E \Big[ \sigma\big(u(s,y)\big)\sigma\big(u(s,z)\big) \Big] |y-z|^{-\beta} dy dz ds \notag \\
& \geq \int_0^t \int_{\RR^{2d}} \varphi_R(s,y)\varphi_R(s,z)\E \Big[ \sigma\big(u_1(s,y)\big)\sigma\big(u_1(s,z)\big)  \Big] |y-z|^{-\beta} dy dz ds. \label{lasint}
\end{align}
In view of point (iii) from Remark \ref{rem0}, our assumption $\sigma(c_1)\neq 0$ guarantees that the last integral \eqref{lasint} has the exact order $R^{2d-\beta}$: More precisely,
\[
\eqref{lasint} \sim \kappa_\beta \left( \int_0^t \eta_1(s)^2 ds \right) R^{2d-\beta} \quad \text{as $R\to+\infty$,}
\]
where $\eta_1(s) := \E\big[ \sigma(u_1(s,y))\big]$ does not depend on $y$ and $\int_0^t \eta_1(s)^2 ds\in(0,\infty)$; see also \eqref{NOTA2}. 

\medskip

Now let $u_2(t,x)$ be the solution to the equation \eqref{eq:heat-equation} with initial condition $u(0,x)=c_2$. Notice that by our assumption on $\sigma$, we get $\sigma(c_2)\neq 0$. And by applying the same weak comparison principle, we deduce that 
\begin{align*}
\E\big[ \widetilde{G}^2_R(t) \big] & \leq \int_0^t \int_{\RR^{2d}} \varphi_R(s,y)\varphi_R(s,z)\E \Big[ \sigma\big(u_2(s,y)\big)\sigma\big(u_2(s,z)\big)  \Big] |y-z|^{-\beta} dy dz ds\\
&\sim \kappa_\beta \left( \int_0^t \eta_2(s)^2 ds \right) R^{2d-\beta} \quad \text{as $R\to+\infty$,}
\end{align*}
where $\eta_2(s) := \E\big[ \sigma(u_2(s,y))\big]$ does not depend on $y$ and $\int_0^t \eta_2(s)^2 ds\in(0,\infty)$. This implies that
\[
 0 < \liminf_{R\to+\infty}   \widetilde{\sigma}_R^2     R^{\beta-2d}    \leq  \limsup_{R\to+\infty}   \widetilde{\sigma}_R^2     R^{\beta-2d}  < +\infty\,.
\]
The rest of the proof follows    the same lines as the proof of Theorem \ref{thm:TV-distance}. 
\end{proof}

 \section{Proof of Theorem \ref{thm:functional-CLT}}
\label{sec:thm2}
In order to prove Theorem \ref{thm:functional-CLT}, we need to establish  the  convergence of the finite-dimensional distributions as well as the tightness.

\medskip

 \begin{proof}[\bf Convergence of finite-dimensional distributions] ~   Fix   $0\le t_1< \cdots <t_m \le T$ and consider  
\[
F_R(t_i)  :=   R^{\frac {\beta}2 -d} \int_{B_R} \big[ u(t_i,x) - 1 \big]~dx    = \delta\big( v_R^{(i)}\big) ~\text{for $i=1, \dots, m$,}
\]
where   
$$
v_R^{(i)} (s,y) =    \mathbf{1} _{[0,t_i]}(s) R^{\frac {\beta}2 -d}  \sigma\big(u(s,y)\big)\varphi^{(i)}_R(s,y)   
$$
with $   \varphi^{(i)}_R(s,y) =    \int_{B_R}   p_{t_i -s} (x-y)  dx$.
Set $\mathbf{F}_R=\big( F_R(t_1), \dots, F_R(t_m) \big)$ and let $Z$ be a centered  Gaussian   vector on $\RR^m$ with covariance $(C_{i,j})_{1\leq i,j\leq m}$ given by  
\[
C_{i,j}    :=  k_\beta  \int _0^{t_i \wedge t_j}     \eta^2(r)  ~dr \,,~ \text{with $\eta(r):= \E\big[\sigma\big(u(r,y)\big)\big]$.}
\]
To proceed,   we  need the following  generalization of    \cite[Theorem 6.1.2]{NP};  see \cite[Proposition 2.3]{HNV18} for a proof.

\begin{lemma}\label{lemma: NP 6.1.2}
Let $F=( F^{(1)}, \dots, F^{(m)})$ be a random vector such that $F^{(i)} = \delta (v^{(i)})$ for $v^{(i)} \in {\rm Dom}\, \delta$ and
$  F^{(i)} \in \mathbb{D}^{1,2}$, $i = 1,\dots, m$. Let $Z$ be an $m$-dimensional centered Gaussian  vector with covariance $(C_{i,j})_{ 1\leq i,j\leq m} $. For any  $C^2$ function $h: \R^m \rightarrow \R$ with bounded second partial derivatives, we have
\[
\big| \E [ h(F)] -\E [ h(Z)]  \big| \le \frac{m}{2}  \|h ''\|_\infty \sqrt{   \sum_{i,j=1}^m   \E \Big[ \big(C_{i,j} - \langle DF^{(i)}, v^{(j)} \rangle_{\HH}\big)^2 \Big] } \,,
\]
where $\|h ''\|_\infty : = \sup\big\{   \big\vert \frac{\partial^2}{\partial x_i\partial x_j} h(x) \big\vert\,:\, x\in\R^d\, , \, i,j=1, \ldots, m \big\}$.
\end{lemma}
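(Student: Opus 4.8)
The natural route is the Gaussian interpolation (``smart path'') argument behind multivariate Stein's method, feeding the Skorohod representations $F^{(i)}=\delta(v^{(i)})$ into Malliavin's integration-by-parts formula and handling the $Z$-contribution via the Gaussian integration-by-parts identity. First I would enlarge the probability space so that it carries an $m$-dimensional centered Gaussian vector $Z$ with covariance $(C_{i,j})$, independent of the isonormal process $W$; the operators $D,\delta$ act only in the $W$-direction, so $DZ^{(j)}=0$, while the duality relation $\E[G\,\delta(v^{(i)})]=\E[\langle DG,v^{(i)}\rangle_\HH]$ for $G\in\mathbb{D}^{1,2}$ persists on the product space (apply it fibrewise in the $Z$-variable and use Fubini). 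For $t\in[0,1]$ set $X_t:=\sqrt t\,F+\sqrt{1-t}\,Z$ and $\Psi(t):=\E[h(X_t)]$, so that $\Psi(1)=\E[h(F)]$, $\Psi(0)=\E[h(Z)]$ and $\E[h(F)]-\E[h(Z)]=\int_0^1\Psi'(t)\,dt$. Since $h\in C^2$ has bounded second derivatives (hence at most quadratic growth, with $\nabla h$ of at most linear growth) and $F^{(i)}\in\mathbb{D}^{1,2}\subset L^2(\Omega)$, $Z^{(i)}\in L^2(\Omega)$, one may differentiate under the expectation on $(0,1)$:
\[
\Psi'(t)=\sum_{i=1}^m\E\Big[\partial_i h(X_t)\Big(\tfrac{1}{2\sqrt t}F^{(i)}-\tfrac{1}{2\sqrt{1-t}}Z^{(i)}\Big)\Big].
\]

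The second step is to rewrite each of the two families of terms. As $\partial_i h$ is $C^1$ with bounded derivative, the chain rule gives $\partial_i h(X_t)\in\mathbb{D}^{1,2}$ with $D\big(\partial_i h(X_t)\big)=\sqrt t\sum_j\partial_{ij}h(X_t)\,DF^{(j)}$ (using $DZ^{(j)}=0$), so the duality formula yields
\[
\E\big[\partial_i h(X_t)\,F^{(i)}\big]=\E\big[\partial_i h(X_t)\,\delta(v^{(i)})\big]=\sqrt t\sum_{j=1}^m\E\big[\partial_{ij}h(X_t)\,\langle DF^{(j)},v^{(i)}\rangle_\HH\big].
\]
For the Gaussian part, conditioning on $W$ and applying $\E[Z^{(i)}g(Z)]=\sum_jC_{i,j}\E[\partial_jg(Z)]$ with $g(z)=\partial_i h(\sqrt t\,F+\sqrt{1-t}\,z)$ gives $\E\big[\partial_i h(X_t)\,Z^{(i)}\big]=\sqrt{1-t}\sum_jC_{i,j}\,\E[\partial_{ij}h(X_t)]$. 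Substituting both in, the prefactors $\tfrac1{2\sqrt t}$ and $\tfrac1{2\sqrt{1-t}}$ cancel the $\sqrt t$ and $\sqrt{1-t}$, and after relabelling $i\leftrightarrow j$ (legitimate since $\partial_{ij}h$ is symmetric) one arrives at
\[
\Psi'(t)=\frac12\sum_{i,j=1}^m\E\Big[\partial_{ij}h(X_t)\,\big(\langle DF^{(i)},v^{(j)}\rangle_\HH-C_{i,j}\big)\Big].
\]
In particular this expression extends continuously to $[0,1]$, so the apparent singularities of $\Psi'$ at the endpoints are harmless and the fundamental theorem of calculus applies.

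Finally I would bound $|\E[h(F)]-\E[h(Z)]|\le\int_0^1|\Psi'(t)|\,dt$ using $|\partial_{ij}h|\le\|h''\|_\infty$, then the Cauchy--Schwarz (Jensen) inequality on $\Omega$, and then the Cauchy--Schwarz inequality over the $m^2$ index pairs $(i,j)$, which produces the factor $\sqrt{m^2}=m$:
\[
|\E[h(F)]-\E[h(Z)]|\le\frac{\|h''\|_\infty}{2}\sum_{i,j=1}^m\sqrt{\E\big[(\langle DF^{(i)},v^{(j)}\rangle_\HH-C_{i,j})^2\big]}\le\frac m2\,\|h''\|_\infty\sqrt{\sum_{i,j=1}^m\E\big[(C_{i,j}-\langle DF^{(i)},v^{(j)}\rangle_\HH)^2\big]},
\]
which is exactly the claimed estimate. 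For a lemma of this type there is no deep obstacle: the only points requiring care are (a) setting up the enlarged space so that Malliavin duality and Gaussian integration by parts coexist with $DZ=0$, and (b) the mild integrability needed to differentiate $\Psi$ and to pass through $t\to0^+,1^-$ — both are routine given $F^{(i)}\in\mathbb{D}^{1,2}$ and the quadratic growth of $h$, and are treated exactly as in \cite[Theorem 6.1.2]{NP} and \cite[Proposition 2.3]{HNV18}; the remaining manipulations are bookkeeping.
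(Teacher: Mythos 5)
Your proposal is correct, and it is essentially the same Gaussian interpolation (smart-path) argument used in the proof the paper points to, namely \cite[Proposition 2.3]{HNV18}, which generalizes \cite[Theorem 6.1.2]{NP}: Skorohod--Malliavin duality for the $F$-terms, Gaussian integration by parts for the $Z$-terms, symmetry of $\partial_{ij}h$, and Cauchy--Schwarz over the $m^2$ index pairs yielding the factor $m/2$. The technical points you flag (independent enlargement of the space with $DZ=0$, differentiating under the expectation, endpoint behaviour of $\Psi'$) are handled exactly as in those references, so nothing is missing.
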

 
 \noindent In view of  Lemma \ref{lemma: NP 6.1.2}, the proof of   $\mathbf{F}_R\xrightarrow{\rm law} Z$ boils down to   the $L^2(\Omega)$-convergence of $\langle DF_R(t_i), v_R^{(j)} \rangle_{\HH}$  to  $C_{i,j} $ for each $i,j$, as $R\to+\infty$.  The case $i=j$ has been covered in the proof of Theorem \ref{thm:TV-distance} and for the other cases,       we need to show
\begin{equation} \label{bb1}
 \E\big[  F_R(t_i) F_R(t_j)  \big] \to C_{i,j}
 \end{equation}
 and
 \begin{equation} \label{bb2}
 {\rm Var}\big( \langle DF_R(t_i), v_R^{(j)} \rangle_{\HH} \big)\to 0,  
 \end{equation}
  as $R\to+\infty$.
The point (\ref{bb1})   has been established in {\it Remark} \ref{rem2}.  To see point (\ref{bb2}),  we put
         \begin{align*}
         B_1(i,j)        & =   R^{\beta -2d}  \int_0^{t_i\wedge t_j} \int_{\R^d}  \varphi_R^{(i)}(s,y) \varphi_R^{(j)}(s,y')   \sigma (u(s,y)) \sigma(u(s,y'))\\
         & \qquad \times  |y-y'|^{-\beta}  dy dy' ds 
\end{align*}
and
\begin{align*}
B_2(i,j)&=  R^{\beta -2d}  \int_0^{t_i\wedge t_j} \int_{\R^{2d}} \varphi^{(j)}_R(s,y') \sigma(u(s,y'))|y-y'|^{-\beta}  \\
&\qquad  \times\left( \int_s^{t_i}\int_{\R^d} \varphi^{(i)}_R(r,z) \Sigma(r,z) D_{s,y} u(r,z) W(dr,dz) \right)  \,    dydy'ds,
\end{align*}
so that $\langle DF_R(t_i), v_R^{(j)} \rangle_{\HH}  = B_1(i,j) +B_2(i,j)  $. Therefore, 
\begin{align*}
& \quad \E \left[\Big(C_{ij}-\langle DF_R(t_i), v_R^{(j)}\rangle_{\mathfrak{H}} \Big)^2\right] \\
& \leq 3 \big(C_{i,j} - \E[ B_1(i,j) ]  \big)^2  +  3  {\rm Var}\big[ B_1(i,j) \big]  + 3  {\rm Var} \big[ B_2(i,j)\big]\,.
\end{align*}
Going through the same lines as for the estimation of $A_1, A_2$, one can verify  easily that both $ {\rm Var}\big[ B_1(i,j) \big]$ and $  {\rm Var} \big[ B_2(i,j)\big]$ vanish asymptotically, as $R$ tends to infinity. By recalling that $C_{ij} = \lim_{R \to +\infty} \E \big[ B_1(i,j)\big]$, the convergence of finite-dimensional distributions is thus established. \qedhere

\end{proof}

\begin{proof}[\bf Tightness]      The following proposition together with Kolmogorov's criterion ensures the tightness of the processes
$\big\{  R^{\frac {\beta} 2-d}  \int_{B_R} \big[ u(t,x) -1 \big] \,dx \,, t\in[0,T]   \big\}$, $R > 0$.

\medskip

\begin{proposition}
\label{pro:tightness}
Recall the notation \eqref{NOTA1}. For any $0\leq s < t\leq T$,  for any $p\in[ 2,\infty)$ and any $ \alpha \in(0, 1)$, it holds that 
\begin{align}\label{tendue}
\E\big[ \vert G_R(t) - G_R(s) \vert^p\big] \leq C R^{p(d-\frac  \beta 2)}(t-s)^{\alpha p/2}~ ,
\end{align}
where  the constant $C = C_{p,T,\alpha}$ may  depend on $T$, $p$ and $\alpha$.
\end{proposition}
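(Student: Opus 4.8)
\textbf{Proof proposal for Proposition \ref{pro:tightness}.}

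The plan is to write $G_R(t) - G_R(s) = \delta(w_R)$ for a suitable integrand $w_R$ and then control the $p$-th moment via the Burkholder--Davis--Gundy inequality for Dalang--Walsh integrals, reducing everything to estimates on the heat kernel that are already available in the excerpt (Lemma \ref{lemma:integral}, Lemma \ref{lemma: iteration}, and the estimate \eqref{easy2}). Concretely, using the mild formulation \eqref{eq: mild} and Fubini, I would write
\[
G_R(t) - G_R(s) = \int_0^t \int_{\R^d} \big[\varphi_R(t;r,y) - \varphi_R(s;r,y)\big]\,\sigma(u(r,y))\,W(dr,dy),
\]
where I extend the notation $\varphi_R(t;r,y) = \int_{B_R} p_{t-r}(x-y)\,dx$ for $r \le t$ and set it equal to $0$ for $r > t$; note $\varphi_R(s;r,y) = 0$ for $r \in (s,t]$. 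Splitting the time integral at $r = s$ gives two pieces: a ``near'' piece on $[s,t]$ involving only $\varphi_R(t;r,y)$, and a ``difference'' piece on $[0,s]$ involving $\varphi_R(t;r,y) - \varphi_R(s;r,y)$.

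For each piece I would apply the BDG/Minkowski inequality in the form
\[
\E\big[|\delta(w_R)|^p\big] \le C_p\, \E\Big[\Big(\int_0^t \int_{\R^{2d}} |w_R(r,y)|\,|w_R(r,y')|\,|y-y'|^{-\beta}\,dy\,dy'\,dr\Big)^{p/2}\Big],
\]
then bound $\|\sigma(u(r,y))\|_p \le C(1 + \|u(r,y)\|_p) \le C$ uniformly by strict stationarity (point (ii) of Remark \ref{rem0}), so the problem becomes deterministic. The near piece produces a spatial integral of the shape $\int_{B_R^2} \E[|x_1 - x_2 + \sqrt{2(t-r)}Z|^{-\beta}]\,dx_1 dx_2$, which by Lemma \ref{lemma:integral} and the scaling $x_i \mapsto R x_i$ is $\le C R^{2d-\beta}$ uniformly in $r$; integrating $dr$ over $[s,t]$ yields a factor $(t-s)$, hence a contribution $\le C R^{p(d-\beta/2)} (t-s)^{p/2}$, which is even better than \eqref{tendue} for $\alpha < 1$. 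The difference piece is the heart of the matter: here I would use the elementary bound $|\varphi_R(t;r,y) - \varphi_R(s;r,y)| = |\int_{B_R}(p_{t-r} - p_{s-r})(x-y)\,dx|$ and interpolate, writing $|p_{t-r}(z) - p_{s-r}(z)| \le (p_{t-r}(z) + p_{s-r}(z))$ trivially but, to extract the $(t-s)^{\alpha/2}$ gain, using instead a H\"older-type inequality: for $\alpha \in (0,1)$,
\[
|p_{t-r}(z) - p_{s-r}(z)| \le \big(p_{t-r}(z) + p_{s-r}(z)\big)^{1-\alpha}\,\big|p_{t-r}(z) - p_{s-r}(z)\big|^{\alpha},
\]
combined with the standard temporal regularity estimate for the heat semigroup, e.g. $\int_{\R^d}|p_{t-r}(z) - p_{s-r}(z)|\,dz \le C (t-s)^{1/2}(s-r)^{-1/2}$ or the cleaner Fourier-side bound $|\widehat{p_{t-r}}(\xi) - \widehat{p_{s-r}}(\xi)| \le C(t-s)^{\alpha/2}|\xi|^{\alpha} \widehat{p_{s-r}}(\xi)^{\text{something}}$, to pull out the desired power of $(t-s)$ while keeping an integrable singularity in $(s-r)$.

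I expect the main obstacle to be precisely this temporal regularity estimate for the difference piece: one must trade $(t-s)^{\alpha/2}$ against a factor $(s-r)^{-\alpha/2}$ (or similar) which is integrable in $r$ on $[0,s]$ for $\alpha < 1$, while simultaneously retaining enough spatial decay of the kernels so that, after the change of variables $x \mapsto Rx$ on $B_R$ and repeated application of Lemma \ref{lemma:integral} to absorb all Gaussian convolution factors, the spatial integral still scales like $R^{2d-\beta}$ and the $|y-y'|^{-\beta}$ singularity is controlled via \eqref{easy2}. Once both pieces are bounded by $C R^{p(d-\beta/2)}(t-s)^{\alpha p/2}$, the proposition follows; Kolmogorov's continuity criterion then gives tightness in $C([0,T])$ since $\alpha p/2 > 1$ for $p$ large, completing the proof of Theorem \ref{thm:functional-CLT}.
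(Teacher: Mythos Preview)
Your strategy coincides with the paper's: same stochastic-integral representation of $G_R(t)-G_R(s)$, same Burkholder--Minkowski reduction to a deterministic double integral, same split at $r=s$, and the same scaling plus Lemma~\ref{lemma:integral} to extract the factor $R^{2d-\beta}$ from each piece; the near piece $[s,t]$ is handled identically.

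The one point where your proposal is vague is exactly the one you flag. Your interpolation $|p_{t-r}-p_{s-r}|\le (p_{t-r}+p_{s-r})^{1-\alpha}|p_{t-r}-p_{s-r}|^{\alpha}$ combined with the $L^1$ bound $\int_{\R^d}|p_{t-r}-p_{s-r}|\le C(t-s)^{1/2}(s-r)^{-1/2}$ does produce the right temporal factor $(t-s)^{\alpha/2}(s-r)^{-\alpha/2}$ after H\"older in $x$, but it destroys the $y$-dependence: the resulting bound on $\int_{B_R}|p_{t-r}-p_{s-r}|(x-y)\,dx$ is a constant in $y$, and then the Riesz integral $\int_{\R^{2d}}|y-y'|^{-\beta}\,dy\,dy'$ diverges. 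The paper avoids this by invoking a \emph{pointwise} estimate (Lemma~3.1 of Chen--Huang \cite{CH}):
\[
|p_t(x)-p_{t'}(x)| \le C\, t^{-\alpha/2}(t'-t)^{\alpha/2}\, p_{4t'}(x), \qquad 0<t\le t',\ \alpha\in(0,1),
\]
which preserves a Gaussian envelope. With this, each factor $\big|\int_{B_R}(p_{t-r}-p_{s-r})(x-y)\,dx\big|$ is bounded by $(s-r)^{-\alpha/2}(t-s)^{\alpha/2}\int_{B_R}p_{4(t-r)}(x-y)\,dx$, and the rest of the computation becomes identical to the near-piece case, giving $\mathbf{T}_1\le C(t-s)^{\alpha}R^{2d-\beta}\int_0^s(s-r)^{-\alpha}\,dr$. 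So your ``main obstacle'' is resolved by this single lemma; once you have it, the argument closes exactly as you outline.
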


Notice that   the $p$th  moment   of an increment of the solution $u(t,x) -u(s,x)$ is bounded by
 a constant times $|t-s|^ {\frac{\alpha p }{2}(1-\frac \beta2)}$, for any  $ \alpha \in(0, 1)$, see \cite{SS}; and here the 
 spatial  averaging has improved the H\"older continuity.

\medskip
Now we present the proof of   Proposition \ref{pro:tightness}.
  Let  $0\leq s < t\leq T$ and set 
$$
\Theta_{x,t,s}(r,y) = p_{t-r}(x-y)\textbf{1}_{\{r\leq t\}}-p_{s-r}(x-y)\textbf{1}_{\{r\leq s\}}.
$$
Then, 
$$ 
G_R(t) - G_R(s)= \int_0^T\int_{\R^d}\left(\int_{B_R} \Theta_{x,t,s}(r,y)\, dx\right)\sigma\big(u(r,y)\big) \,W(dr,dy) \,,
$$
so that 
\begin{align*}
&\quad \E\Big[  | G_R(t) - G_R(s)|^p \Big]\\
&\leq C~  \Bigg\| ~ \int_0^T dr \int_{\R^{2d}} dy dy'~ |y-y'|^{-\beta}  \left(\int_{B_R}\Theta_{x,t,s}(r,y)~dx\right) \sigma\big(u(r,y)\big)  \\
&\qquad\qquad  \times \left[ \int_{B_R}\Theta_{\tilde{x},t,s}(r,y) ~ d\tilde{x} \right] \sigma\big(u(r,y')\big)    \Bigg\|^{p/2}_{p/2} \quad \text{\small by Burkholder's inequality}\\
 &\leq  C \Bigg\{     \int_0^T dr \int_{\R^{2d}} dy dy' |y-y'|^{-\beta}  \left|\int_{B_R}\Theta_{x,t,s}(r,y)dx\right| \times \left|\int_{B_R}\Theta_{\tilde{x},t,s}(r,y)d\tilde{x}\right|\\
&\qquad\qquad \quad \times\big\|\sigma(u(r,y))\sigma(u(r,y'))\big\|_{p/2} \Bigg\}^{p/2} \quad \text{by Minkowski's inequality}.
\end{align*}
Therefore, taking into account that $ \|\sigma(u(r,y))\sigma(u(r,y'))\|_{p/2}$ is uniformly bounded, we obtain
\begin{align*}
&\quad \E\Big[  | G_R(t) - G_R(s)|^p \Big]\\
  &\leq C \left\{ ~ \int_0^T  \int_{\R^{2d}} \left\vert \int_{B_R}\Theta_{x,t,s}(r,y)dx\right\vert  \left|\int_{B_R}\Theta_{\tilde{x},t,s}(r,y)d\tilde{x}\right||y-y'|^{-\beta} ~ dy dy'dr\right\}^{p/2}\\
  &\leq C \Bigg\{ \int_0^s \int_{\R^{2d}} \left\vert\int_{B_R}\big(p_{t-r}(x-y) -p_{s-r}(x-y) \big) dx\right\vert\\
&\qquad\qquad \times\left\vert \int_{B_R}\big(p_{t-r}(\tilde{x}-y') -p_{s-r}(\tilde{x}-y') \big) d\tilde{x}\right\vert   \big\vert y-y' \big\vert^{-\beta}~dy dy' dr\Bigg\}^{p/2}\\
&\quad + C \Bigg[ \int_s^t \int_{\R^{2d}} \left(\int_{B_R^2}p_{t-r}(x-y)p_{t-r}(\tilde{x}-y')~dxd\tilde{x}\right) |y-y'|^{-\beta} dy dy' dr\Bigg]^{p/2}\\
&=:C \big(\mathbf{T}_1^{p/2} + \mathbf{T}_2^{p/2} \big)\,.
\end{align*}

\medskip

\noindent{\it Estimation of the  term $\mathbf{T}_1$.} We need  Lemma 3.1 in \cite{CH}: For all $\alpha\in(0,1)$, $x,y\in\R^d$ and $t'\ge t>0$, 
$$
 \left|p_t(x)-p_{t'}(x)\right| \leq C t^{-\alpha/2}(t'-t)^{\alpha/2}p_{4t'}(x)\,.$$ Thus,  
\begin{align*}
  &\qquad (t-s)^{-\alpha} \mathbf{T}_1\\
  & \leq   \int_0^s dr  (s-r)^{-\alpha} \int_{B_R^2}  dxd\tilde{x}  \int_{\R^{2d}}dy dy'     p_{4(t-r)}(x-y)     p_{4(t-r)}(\tilde{x}-y')     |y-y'|^{-\beta} \\
&=   \int_0^s dr  (s-r)^{-\alpha} \int_{B_R^2}  dxd\tilde{x}  \int_{\R^{2d}}dy dy'     p_{4(t-r)}(\theta)   p_{4(t-r)}(\tilde{\theta})     \big|x- \tilde{x} -\theta + \tilde{\theta} \big|^{-\beta} \,,
\end{align*}
by the change of variable $\theta = x - y$, $\tilde{\theta} = \tilde{x} - y'$. Consequently,  with $Z$ a standard Gaussian random vector on $\R^d$, we continue to write 
\begin{align*}
(t-s)^{-\alpha} \mathbf{T}_1&\leq    \int_0^s dr  (s-r)^{-\alpha} \int_{B_R^2}  dxd\tilde{x} ~\E\Big[ \vert x - \tilde{x} + \sqrt{8t-8r}Z\vert^{-\beta} \Big]  \\
&= R^{2d-\beta}  \int_0^s dr  (s-r)^{-\alpha} \int_{B_1^2}  dxd\tilde{x} ~\E\Big[ \vert x - \tilde{x} + R^{-1}\sqrt{8t-8r}Z\vert^{-\beta} \Big] \\
&\leq CR^{2d-\beta}   \int_0^s dr  (s-r)^{-\alpha} \int_{B_1^2}  dxd\tilde{x} \vert x - \tilde{x}  \vert^{-\beta} \quad \text{using Lemma \ref{lemma:integral}} \\
&\leq CR^{2d-\beta}   \,.
\end{align*}
  
  \bigskip 
  
\noindent{\it Estimation of the  term $\mathbf{T}_2$.}  \quad  We use a similar change of variable as before:
\begin{align*}
\mathbf{T}_2 &=  \int_s^t  dr \int_{B_R^2} dx d\tilde{x} \int_{\R^{2d}} p_{t-r}(\theta)p_{t-r}(\tilde{\theta} ) \big\vert x - \tilde{x} + \tilde{\theta} -\theta \big\vert^{-\beta} d\theta d\tilde{\theta}   \\
&= \int_s^t  dr \int_{B_R^2} dx d\tilde{x} ~\E\Big\{ \big\vert x - \tilde{x} + \sqrt{2t-2r} Z \big\vert^{-\beta} \Big\}  \\
&\leq CR^{2d-\beta}  \int_s^t  dr \int_{B_1^2} dx d\tilde{x}  \big\vert x - \tilde{x}  \big\vert^{-\beta} \quad \text{using Lemma \ref{lemma:integral}} \\
&= CR^{2d-\beta} (t-s) k_\beta \,.
\end{align*}
Combining the above two estimates, we obtain \eqref{tendue}. \qedhere

\end{proof}

\section{Acknowledgement}
The authors thank the anonymous referee for many constructive advices that improved this paper.

\end{document}